\theoremstyle{definition}
\newtheorem{theorem}{Theorem}
\newtheorem{corollary}[theorem]{Corollary}
\newtheorem{proposition}[theorem]{Proposition}
\newtheorem{lemma}[theorem]{Lemma}
\newtheorem{definition}[theorem]{Definition}
\newtheorem{example}[theorem]{Example}
\newtheorem{notation}[theorem]{Notation}
\newtheorem{remark}[theorem]{Remark}
\date{}
\newcommand{\numberset}{\mathbb}
\newcommand{\N}{\numberset{N}}
\newcommand{\F}{\numberset{F}}
\newcommand{\Pro}{\numberset{P}}
\newcommand{\Ol}{\mathcal{O}}
\newcommand{\mC}{\mathcal{C}}
\newcommand{\mL}{\mathcal{L}}
\newcommand{\mV}{\mathcal{V}}
\newcommand{\mB}{\mathcal{B}}
\begin{document}

\title[]{On the duals of geometric Goppa codes from norm-trace curves}

\author{Edoardo Ballico$^1$}
\address{Department of Mathematics, University of Trento\\Via Sommarive 14,
38123 Povo (TN), Italy}
\email{$^1$edoardo.ballico@unitn.it}

\author{Alberto Ravagnani$^2$ $^*$}
\address{Department of Mathematics, University of Neuch\^{a}tel\\Rue
Emile-Argand 11, CH-2000 Neuch\^{a}tel, Switzerland}
\email{$^2$alberto.ravagnani@unine.ch}

\thanks{$^1$Partially supported by MIUR and GNSAGA}
\thanks{$^*$Corresponding author}
\subjclass[2010]{94B27; 14C20; 11G20}
\keywords{norm-trace curve; minimum distance; minimum-weight
codeword.}

\maketitle

\providecommand{\bysame}{\leavevmode\hbox to3em{\hrulefill}\thinspace}

\begin{abstract}
  In this paper we study the dual codes of a wide family of
evaluation codes on
norm-trace curves. We explicitly find out their minimum distance and give
a lower bound for the number of their minimum-weight codewords.
A general geometric approach is performed and applied to study in particular the
dual codes of
 one-point and two-point codes arising from norm-trace curves through Goppa's
construction,
 providing in many cases their minimum
distance and some bounds on the number of their minimum-weight codewords.
The results are obtained by showing that the supports of the minimum-weight
codewords of the studied codes obey some precise geometric laws as
zero-dimensional subschemes of the projective plane. Finally, the dimension of
some classical two-point Goppa codes on norm-trace curves is explicitely
computed.
\end{abstract}

\section{Introduction} \label{intr}
Let $r \ge 2$ be an integer and let $q$ denote a prime power (fixed). Consider
the field
extension $\F_q \subseteq \F_{q^r}$ and denote by $\Pro^2$ the projective plane
defined
over the field $\F_{q^r}$. Write $c:=\frac{q^r-1}{q-1}$ and denote by $Y_r
\subseteq \Pro^2$ the curve having
$$x^c=y^{q^{r-1}}+y^{q^{r-2}}+\cdots +y^q+y$$
as an affine equation. Denote by $\mbox{Tr}_r:\F_{q^r} \to \F_q$ and
$\mbox{N}_r:\F_{q^r} \to \F_q$ the $\F_q$-linear maps (named \textbf{trace} and
\textbf{norm}, respectively) defined by
$$\mbox{Tr}_r (\alpha):=\alpha^{q^{r-1}}+ \alpha^{q^{r-2}}+ \cdots + \alpha, \ \
\ \ \mbox{N}_r(\alpha):=\alpha^c, \ \ \ \ \ \mbox{for any } \alpha \in
\F_{q^r}.$$
The curve $Y_r$ is in fact defined by the equation
$\mbox{N}_r(x)=\mbox{Tr}_r(y)$ and so it is called the \textbf{norm-trace} curve
associated to the integer $r$.
If $r=2$ then $Y_2$ is the well-known Hermitian curve. We studied the geometric
properties of the dual codes of Goppa codes on $Y_2$ in \cite{br1}, \cite{br2}
and \cite{br3}.
 Here we focus on the more complicated case $r \ge 3$.
In this situation the curve $Y_r$ is singular. The only point at infinity, of
projective coordinates $P_\infty:=(0:1:0)$, is also the only singular point of
the curve (straightforward computation). Denote by $\pi:C_r \to Y_r$ the
normalization, which is known to be a bijection. The genus of $Y_r$ (which is by
definition the genus of $C_r$) is $g=(q^{r-1}-1)(c-1)/2$ and the Weierstrass
semigroup
associated to $P_\infty$ is well studied in \cite{geo} and known to be
$$H(P_\infty)= \langle q^{r-1},c \rangle.$$
The curve $Y_r$ carries $|Y_r(\F_{q^r})|=q^{2r-1}+1$ rational points and we have
already
stated that $q^{2r-1}$ of them lie in the affine chart $\{ z \neq 0 \}$. Let
$Q_\infty:=\pi^{-1}(P_\infty)$. For any $0 \le s \le cq^r$ a basis of the
Riemann-Roch
space $L(sQ_\infty)$ is formed by the (pull-backs of the) monomials
$$\{ x^iy^j : \  \ i<q^r, \ \ j<q^{r-1}, \ \ iq^{r-1}+jc \le s \}$$
(see \cite{geil}). Since for any prime power $q$ and for any $r \ge 2$ we get
$(q^r-1)/(q-1)
>q^{r-1}$, the degree of $Y_r$ is exactly $c=(q^r-1)/(q-1)$. 
 The pull-backs of the monomials $\{ 1,x,y\}$ span the vector space
$H^0(C_r,\pi^*(\Ol_{Y_r}(1)))$, which is contained into $L(cQ_\infty)$.
Since we know $\dim_{\F_{q^r}} L(cQ_\infty)=3$, we get exactly
$L(cQ_\infty)=H^0(C_r,\pi^*(\Ol_{Y_r}(1)))$, the vector space of the homogeneous
degree $1$ forms on the curve $Y_r$ (we pull-back forms through $\pi$ in order
to work on a smooth curve).
More generally, if $0<d<q$ then the vector space of the degree $d$ homogeneous
forms on the curve $Y_r$, $H^0(C_r,\pi^*(\Ol_{Y_r}(d)))$, is exactly
$L(dcQ_\infty)$ and we will widely use this geometric fact in the paper to get a
bond between classical Goppa codes and a new class of evaluation codes. For any
$0<d<q^{r-1}$ a natural basis for the vector space
$H^0(C_r,\pi^*(\Ol_{Y_r}(d)))$ of the degree $d$ homogeneous forms on the curve
$Y_r$ is made of the monomials $x^iy^j$ such that $i,j \ge 0$ and $i+j \le d$
(up to a homogeneization). Indeed, these monomials are linearly independent
because they appear also in the cited basis of $L(dcQ_\infty)$. In general we
have an inclusion of vector spaces
$$H^0(C_r,\pi^*(\Ol_{Y_r}(d))) \subseteq L(dcQ_\infty).$$

\section{One-point codes: a first analysis} \label{sec1}
In this section we study a simple family of evaluation codes on $Y_r$ curves.
The method will be improved at a second time.
First of all, we state a technical result.

\begin{lemma}\label{u00.01}
Fix integers $d>0$, $z\ge 2$ and a zero-dimensional scheme $Z\subseteq \mathbb
{P}^2$ such that
$\deg (Z) =z$.
\begin{enumerate}
\item[(a)] If $z\le d+1$, then $h^1(\mathbb {P}^2,\mathcal {I}_Z(d))=0$.

\item[(b)]\label{strano1} If $d+2 \le z\le 2d+1$, then $h^1(\mathbb
{P}^2,\mathcal {I}_Z(d))>0$ if and only if there
is a line $L$ such that $\deg (L\cap Z)\ge d+2$.
\end{enumerate}
\end{lemma}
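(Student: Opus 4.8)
The plan is to induct on $z=\deg(Z)$ and, for a well-chosen line $L$, use the residuation exact sequence of $Z$ along $L$. Writing $\ell$ for a linear form cutting out $L$, $w:=\deg(L\cap Z)$, and $Z'$ for the residual scheme of $Z$ along $L$ (so $\deg(Z')=z-w$ and $Z'\subseteq Z$), multiplication by $\ell$ gives the standard exact sequence
\[
0\longrightarrow \mathcal{I}_{Z'}(d-1)\longrightarrow \mathcal{I}_Z(d)\longrightarrow \mathcal{I}_{Z\cap L,L}(d)\longrightarrow 0 ,
\]
in which the last sheaf is supported on $L\cong\Pro^1$ and is isomorphic to $\Ol_{\Pro^1}(d-w)$. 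The mechanism of the proof is that $h^1(\Pro^1,\Ol_{\Pro^1}(d-w))=0$ exactly when $w\le d+1$, so that in that range the cohomology sequence identifies $h^1(\mathcal{I}_Z(d))$ with a quotient of $h^1(\mathcal{I}_{Z'}(d-1))$.

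For part (a), choose $L$ through a point of the support of $Z$, so $1\le w\le z\le d+1$; then $\deg(Z')=z-w\le d=(d-1)+1$, and the inductive hypothesis — with the trivial cases $\deg(W)\le 1$ as base — gives $h^1(\mathcal{I}_{Z'}(d-1))=0$. As $h^1(\Ol_{\Pro^1}(d-w))=0$ as well, the sequence forces $h^1(\mathcal{I}_Z(d))=0$. For part (b), the implication ``$\Leftarrow$'' needs no hypothesis on $z$: if a line $L$ has $\deg(L\cap Z)\ge d+2$, choose inside the divisor $Z\cap L$ on $L$ a subscheme $W$ of degree exactly $d+2$; then $W\subseteq Z$, and the restriction map $H^0(\Pro^2,\Ol_{\Pro^2}(d))\to H^0(W,\Ol_W(d))$ factors through $H^0(L,\Ol_L(d))$, a space of dimension $d+1<d+2$. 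Hence $H^0(\Pro^2,\Ol_{\Pro^2}(d))\to H^0(Z,\Ol_Z(d))$ is not surjective and $h^1(\mathcal{I}_Z(d))>0$.

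The implication ``$\Rightarrow$'' of part (b) is the substantial point; I would prove it by induction on $z$, assuming $h^1(\mathcal{I}_Z(d))>0$ and, for contradiction, that every line meets $Z$ in degree at most $d+1$. The case $d=1$ (where $z=3$) is immediate since $h^1(\mathcal{I}_Z(1))=h^0(\mathcal{I}_Z(1))$, so $h^1>0$ means $Z$ lies on a line. Assume $d\ge2$. First, $w:=\max_{L'}\deg(L'\cap Z)\ge 2$: otherwise $Z$ would be a single reduced point, contradicting $z\ge d+2$ (two distinct points have a secant of intersection degree $\ge2$, and a length-$\ge2$ piece concentrated at $P$ has a line through $P$ of intersection degree $\ge2$, because the image of $\mathcal{I}_{Z,P}$ in $\mathfrak{m}_P/\mathfrak{m}_P^2$ has dimension $\le1$). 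Fix $L$ realizing $w$, so $2\le w\le d+1$. From the exact sequence, $h^1(\mathcal{I}_{Z'}(d-1))$ surjects onto $h^1(\mathcal{I}_Z(d))>0$, so $h^1(\mathcal{I}_{Z'}(d-1))>0$; part (a) then forces $\deg(Z')=z-w\ge d+1$, whence $w\le z-(d+1)\le d$, while also $\deg(Z')=z-w\le 2d-1=2(d-1)+1$. Thus $Z'$ lies in the range of part (b) for the integer $d-1$, and as $\deg(Z')<z$ the inductive hypothesis yields a line $M$ with $\deg(M\cap Z')\ge d+1$; since $Z'\subseteq Z$ this gives $\deg(M\cap Z)\ge d+1>d\ge w$, contradicting the maximality of $w$.

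The only genuinely delicate part is the arithmetic inside ``$\Rightarrow$'': one must choose the line $L$ of maximal intersection degree so that the residual $Z'$ simultaneously fails to impose independent conditions in degree $d-1$ — which, via part (a), pins $\deg Z'$ to at least $d+1$ — and still satisfies $\deg Z'\le 2(d-1)+1$; this is exactly where $w\ge2$ and $z\le 2d+1$ are used, and then the maximality of $w$ closes the induction. Everything else — the residuation sequence, the cohomology of $\Ol_{\Pro^1}(e)$, and the elementary fact about lines through a fat point — is routine.
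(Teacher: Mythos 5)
Your proof is correct. Note that the paper does not actually prove this lemma: it simply cites \cite{br1}, Lemma 2, and your argument --- a double induction using the Castelnuovo residuation sequence $0\to\mathcal{I}_{Z'}(d-1)\to\mathcal{I}_Z(d)\to\mathcal{I}_{Z\cap L,L}(d)\to 0$ along a line of maximal intersection degree --- is the standard proof of this classical fact, so you have in effect supplied the details the paper delegates to its reference. The delicate points all check out: the surjection $H^1(\mathcal{I}_{Z'}(d-1))\twoheadrightarrow H^1(\mathcal{I}_Z(d))$ holds because $w\le d+1$ kills $h^1(\mathcal{O}_{\mathbb{P}^1}(d-w))$; the bound $w\ge 2$ (via the secant line or the tangent direction of a non-reduced germ) guarantees both that the induction on $z$ makes progress and that $\deg Z'\le 2(d-1)+1$; and part (a) applied in degree $d-1$ pins $\deg Z'\ge d+1$, so that the inductive instance of (b) produces the line contradicting the maximality of $w$.
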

\begin{proof}
 See \cite{br1}, Lemma 2.
\end{proof}

\begin{definition} \label{def1}
 Let $0<d<q^{r-1}-1$ be an integer. Set $B:= Y_r \setminus \{ P_\infty \}$. Then
$\mathcal{C}(d)$ will denote the linear code obtained evaluating the vector
space
$H^0(C_r,\pi^*(\Ol_{Y_r}(d)))$ on $\pi^{-1}(B)$.
\end{definition}

\begin{notation}
 By the injectivity of $\pi$, from now to the end of the paper we will write $S$
instead of $\pi^{-1}(S)$, for \textit{any} $S \subseteq Y_r(\F_{q^r})$.
\end{notation}

\begin{remark}
If $0<d<q$ then the code $\mathcal{C}(d)$ is the so-called one-point code
$\mC_s$ ($s:=dc$) on $Y_r$ obtained by
evaluating $L(sP_\infty)$ at the rational points of the curve different from
$P_\infty$ (see Section \ref{intr}). For any $0<d<q^{r-1}-1$ we have an
inclusion of codes $\mathcal{C}(d) \subseteq \mC_s$ (the curve $Y_r$ is not in
general projectively normal) which gives
$\mathcal{C}(d)^\perp \supseteq \mC_s^\perp$. Hence the minimum distance of
$\mC_s^\perp$ is at least the minimum distance of $\mathcal{C}(d)^\perp$
(studied below).
\end{remark}

\begin{theorem}\label{te1}
 The minimum distance of a $\mathcal{C}(d)^\perp$ code is
$d+2$. Moreover, the points in the support of a minimum-weight codewords are
collinear. If $q \le d<q^{r-1}-1$ then the support of a minimum-weight codeword
of $\mathcal{C}(d)^\perp$ is
contained into a line which cannot be horizontal.
\end{theorem}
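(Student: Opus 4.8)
The plan is to translate the whole statement into the geometry of $0$-dimensional subschemes of $\Pro^2$ and then feed it to Lemma \ref{u00.01}. Since $0<d<q^{r-1}-1<c$, no nonzero form of degree $d$ vanishes identically on the irreducible curve $Y_r$, so $H^0(C_r,\pi^*(\Ol_{Y_r}(d)))$ is canonically $H^0(\Pro^2,\Ol_{\Pro^2}(d))$; equivalently, $\mC(d)$ is the code obtained by evaluating all polynomials in $x,y$ of degree $\le d$ at the $q^{2r-1}$ affine $\F_{q^r}$-rational points of $Y_r$. For a reduced set $S\subseteq B$ of $\F_{q^r}$-points, the columns of a generator matrix of $\mC(d)$ indexed by $S$ are $\F_{q^r}$-linearly dependent exactly when the evaluation map $H^0(\Pro^2,\Ol_{\Pro^2}(d))\to\F_{q^r}^{S}$ fails to be surjective, and from the exact sequence $0\to\mathcal I_S(d)\to\Ol_{\Pro^2}(d)\to\Ol_S\to0$ together with $h^1(\Pro^2,\Ol_{\Pro^2}(d))=0$ this is equivalent to $h^1(\Pro^2,\mathcal I_S(d))>0$. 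Hence the minimum distance of $\mC(d)^\perp$ is the least $z$ for which some reduced $S\subseteq B$ with $\deg S=z$ satisfies $h^1(\Pro^2,\mathcal I_S(d))>0$, and the supports of the minimum-weight codewords are exactly the minimal such $S$.

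With this dictionary the first two assertions are short. Lemma \ref{u00.01}(a) gives $h^1(\Pro^2,\mathcal I_S(d))=0$ whenever $|S|\le d+1$, so the minimum distance is at least $d+2$. For the reverse inequality I would exhibit $d+2$ collinear points of $B$: the line $x=a$ passes through $P_\infty$, and because $\mbox{N}_r(a)\in\F_q$ while $\mbox{Tr}_r\colon\F_{q^r}\to\F_q$ is surjective $\F_q$-linear, the equation $\mbox{N}_r(x)=\mbox{Tr}_r(y)$ shows that $x=a$ contains exactly $q^{r-1}$ points of $B$; as $d<q^{r-1}-1$ we have $d+2\le q^{r-1}$, so picking $d+2$ of them and invoking Lemma \ref{u00.01}(b) (valid since $d+2\le z\le 2d+1$ for $d\ge1$) produces a codeword of weight at most, hence exactly, $d+2$. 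If now $S$ is the support of any minimum-weight codeword, then $|S|=d+2$ and $h^1(\Pro^2,\mathcal I_S(d))>0$; no proper subset of $S$ can be linearly dependent, again by part (a), so the codeword is supported on all of $S$, and Lemma \ref{u00.01}(b) forces a line $L$ with $\deg(L\cap S)\ge d+2=|S|$, i.e. $S\subseteq L$. This is the asserted collinearity.

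For the last assertion — that $L$ cannot be horizontal once $q\le d$ — the plan is a case analysis on $L$, computing $|L\cap B|$ from $\mbox{N}_r(x)=\mbox{Tr}_r(y)$: a line $x=a$ carries $q^{r-1}$ points of $B$; a horizontal line $y=b$ carries a single point of $B$ when $\mbox{Tr}_r(b)=0$ and exactly $c$ points when $\mbox{Tr}_r(b)\ne0$, using that the image of the $c$-th power map on $\F_{q^r}^\ast$ is $\F_q^\ast$; and an oblique line $y=mx+k$ carries at most $c$ points, namely the zeros in $\F_{q^r}$ of $x^c-\sum_{i}m^{q^i}x^{q^i}-\mbox{Tr}_r(k)$. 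One then has to show that, under $q\le d$, a horizontal line cannot contain the support of a minimum-weight codeword. I expect this to be the main obstacle: since a horizontal line a priori carries as many as $c>d+2$ rational points of $B$, a bare cardinality count does not suffice, and the hypothesis $d\ge q$ — which is precisely the regime in which $\mC(d)$ starts to sit strictly inside the classical one-point code $\mC_{dc}$ — must be used in an essential way. By contrast, the lower bound, the construction realizing $d+2$, and the collinearity are immediate from Lemma \ref{u00.01}.
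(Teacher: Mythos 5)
Your reduction of the first two assertions to Lemma \ref{u00.01} is correct and is essentially the paper's own route: injectivity of the restriction map $\rho_d$ (valid because $d<q^{r-1}-1<c=\deg Y_r$) converts ``dependent columns indexed by $S$'' into $h^1(\Pro^2,\mathcal{I}_S(d))>0$, part (a) of the Lemma gives the lower bound $d+2$, part (b) gives collinearity of a minimum-weight support, and $d+2$ of the $q^{r-1}\ge d+2$ affine rational points on a vertical line $x=a$ realize the bound. The only cosmetic difference is that the paper obtains the upper bound by noting that the generator matrix restricted to a vertical line has at most the $d+1$ independent rows coming from $1,y,\dots,y^d$, rather than by invoking part (b); both work.

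The third assertion is where your proposal stops, and the missing ingredient is not a finer point count but a correction to the identification in your first sentence. You assert that $H^0(C_r,\pi^*(\Ol_{Y_r}(d)))$ ``is canonically'' $H^0(\Pro^2,\Ol_{\Pro^2}(d))$: injectivity of $\rho_d$ is correct, but surjectivity onto the sections of the pull-back bundle fails for $d\ge q$ (the paper itself remarks that $Y_r$ is not projectively normal). In fact $H^0(C_r,\pi^*(\Ol_{Y_r}(d)))=L(dcQ_\infty)$, and for $d\ge q$ this space contains the extra monomial $x^{d+1}$, since $(d+1)q^{r-1}\le dc$ is equivalent to $d\,\frac{q^{r-1}-1}{q-1}\ge q^{r-1}$, which holds as soon as $d\ge q$. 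This is exactly what excludes horizontal lines: on $y=b$ the evaluated functions include $1,x,\dots,x^{d+1}$, so any $d+2$ columns contain a nonsingular $(d+2)\times(d+2)$ Vandermonde block and are linearly \emph{independent}; hence no dual codeword of weight $\le d+2$ is supported on a horizontal line. Conversely, under your identification the claim would actually be \emph{false}: a horizontal line $y=b$ with $\mbox{Tr}_r(b)\ne 0$ carries $c>d+2$ rational affine points, and if only polynomials of degree $\le d$ were evaluated, the corresponding $d+2$ columns would lie in the $(d+1)$-dimensional span of the values of $1,x,\dots,x^d$ and would be dependent, yielding a minimum-weight codeword supported on that horizontal line. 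So your instinct that the hypothesis $d\ge q$ must enter essentially is right, but it enters through the strict gap between the image of $\rho_d$ and $L(dcQ_\infty)$, not through the geometry of $L\cap Y_r$; as written, your framework cannot establish the third assertion.
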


\begin{proof}
  Consider the line $L$ of equation $x=0$. By the properties of the trace map
the equation $\mbox{Tr}_r(y)=0$ has exactly $q^{r-1}$ distinct solutions,
i.e. $|Y_r(\F_{q^r}) \cap L|=q^{r-1}$. Since $d \le q^{r-1}-2$ we can pick out
$d+2$ distinct affine points $$P_1=(0,y_1), ...,P_{d+2}=(0,y_{d+2})$$ from this
intersection. They are obviously different from $P_\infty$. The natural
parity-check
matrix of $\mathcal{C}(d)^\perp$ has at most $d+1$ non-zero rows (those
associated to the monomials $1,y,...,y^d$). Hence the columns associated to the
points $P_1,...,P_{d+2}$ are linearly dependent, i.e. $\{ P_1,...,P_{d+2}\}$
contains the support of a codeword of $\mathcal{C}(d)^\perp$ of weight $w \le
d+2$. It follows that the minimum distance of $\mathcal{C}(d)^\perp$ is smaller
or equal than $d+2$.
Since $0<d<q^{r-1}-1$ we have in particular $d<c=\deg(Y_f)$. Hence the
restriction (and pull-back) map
$$\rho_d:H^0(\Pro^2,\mathcal{O}_{\Pro^2}(d)) \to H^0(C_r,\pi^*(\Ol_{Y_r}(d)))$$
is injective. Let $S$ be the support of a minimum-weight codeword. The set $S$
imposes dependent conditions to $H^0(C_r,\pi^*(\Ol_{Y_r}(d)))$; moreover, no
proper subset $S' \varsubsetneq S$ imposes dependent conditions to that space.
Hence the minimum distance of $\mathcal{C}(d)^\perp$ is exactly $\sharp(S)$. We
already know that $\sharp(S) \le d+2$. The set $S$ imposes of course dependent
conditions also to the image of $\rho_d$. Since this linear map is injective, we
get that  $S$ imposes dependent conditions also to
$H^0(\Pro^2,\mathcal{O}_{\Pro^2}(d))$, i.e. $h^1(\Pro^2,\mathcal{I}_S(d))>0$. By
Lemma \ref{u00.01} we must have that $\sharp(S) \ge d+2$. Hence $\sharp(S)=d+2$
is the minimum distance of $\mathcal{C}(d)^\perp$. Lemma \ref{u00.01} implies
also that $d+2$ points in the support of a minimum-weight codewords have to be
collinear.

Let us prove the second part of the statement. If $d \ge q$ then
$x^i \in L(dcP_\infty)$ for any $i=0,1,...,d+1$ (while if $d<q$ we do not have
$x^{d+1}$ in $L(dcP_\infty)$ as a monomial). If $q \le d <q^{r-1}-1$ then the
minimum distance of $\mC(d)^\perp$ is again $d+2$
(reached in any case on vertical lines) but $d+2$ columns associated to $d+2$
points lying on a horizontal line are in fact always linearly
\textit{independent} (one can immediately find a Vandermonde submatrix of rank
$d+2$).
\end{proof}

\begin{theorem}\label{te2}
 The number of the minimum-weight codewords of a
$\mathcal{C}(d)^\perp$ code is at least $$(q^r-1)\left[q^r \binom{q^{r-1}}{d+2}+
(q^r-1)\binom{\frac{q^r-1}{q-1}}{d+2} \right].$$
\end{theorem}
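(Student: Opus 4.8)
The plan is to translate the count into one about collinear point configurations on $Y_r$ and then bound it below by explicit families of lines. By Theorem~\ref{te1} the minimum distance is $d+2$ and every minimum-weight codeword of $\mathcal{C}(d)^\perp$ is supported on a set of $d+2$ collinear affine rational points. First I would prove the converse refinement: every set $S$ of $d+2$ collinear affine rational points supports \emph{exactly} $q^r-1$ minimum-weight codewords. Choosing an affine coordinate $t$ on the line through $S$, the monomials $x^iy^j$ with $i+j\le d$ restrict to polynomials of degree $\le d$ in $t$, and already those with the other exponent equal to $0$ give the linearly independent $1,t,\dots,t^d$; so the restriction map has $(d+1)$-dimensional image, and since evaluation at $d+1$ distinct points of a line is injective on polynomials of degree $\le d$, the $(d+2)\times\binom{d+2}{2}$ evaluation matrix $M_S$ (rows indexed by $S$, columns by the monomial basis) has rank exactly $d+1$. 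Hence the left kernel of $M_S$ is one-dimensional and its generator has full support, for a zero coordinate would yield a codeword of weight $\le d+1$, contradicting Theorem~\ref{te1}. Therefore the number of minimum-weight codewords of $\mathcal{C}(d)^\perp$ equals $(q^r-1)\sum_L\binom{n_L}{d+2}$, where $L$ runs over the lines of $\Pro^2$ and $n_L:=\#\bigl(L\cap(Y_r(\F_{q^r})\setminus\{P_\infty\})\bigr)$, and it suffices to prove $\sum_L\binom{n_L}{d+2}\ge q^r\binom{q^{r-1}}{d+2}+(q^r-1)\binom{\frac{q^r-1}{q-1}}{d+2}$.

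For the first summand I would use the $q^r$ vertical lines $L_a=\{x=a\}$, $a\in\F_{q^r}$: such a line meets $Y_r$ exactly in the $q^{r-1}$ affine points $(a,y)$ with $\mbox{Tr}_r(y)=\mbox{N}_r(a)$, together with $P_\infty$ (not affine), so $n_{L_a}=q^{r-1}$ and the vertical lines contribute $q^r\binom{q^{r-1}}{d+2}$ collinear $(d+2)$-subsets. For the second summand I would turn to non-vertical lines. The most transparent are the horizontal lines $\{y=b\}$ with $\mbox{Tr}_r(b)\neq 0$: there the curve reads $x^c=\mbox{Tr}_r(b)\in\F_q^{\ast}$, and since $x\mapsto x^c$ is surjective and exactly $c$-to-one from $\F_{q^r}^{\ast}$ onto $\F_q^{\ast}$ (with $c=\frac{q^r-1}{q-1}$), this has $c$ distinct solutions; on such a line the restricted monomials again span $\langle 1,x,\dots,x^d\rangle$, so each of its $c$ points may enter a support freely. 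There are $q^{r-1}(q-1)$ such horizontal lines, contributing $q^{r-1}(q-1)\binom{c}{d+2}$ subsets; the remaining $(q^{r-1}-1)\binom{c}{d+2}$ needed for the bound I would extract from the other non-vertical lines $\{y=mx+b\}$, whose number of affine rational points is $\#\{x\in\F_{q^r}:\mbox{N}_r(x)-\mbox{Tr}_r(mx)=\mbox{Tr}_r(b)\}$. Finally, since a collinear set of $d+2\ge 3$ points lies on a unique line, all these $(d+2)$-subsets are pairwise distinct, so summing the vertical and non-vertical contributions to $\sum_L\binom{n_L}{d+2}$ and multiplying by the factor $q^r-1$ of the first paragraph gives the stated lower bound.

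I expect the genuine obstacle to be the non-vertical count. For $r=2$ (the Hermitian case of \cite{br1, br2, br3}) the curve is smooth and a line off $P_\infty$ meets it either tangentially in one point or transversally in $c=q+1$ rational points, so maximal sections abound and the bound is a clean ``vertical plus secant lines'' statement; but for $r\ge 3$ the curve $Y_r$ is singular at $P_\infty$ and of large degree $c$, most lines meet it in far fewer than $c$ rational points, and in the cases I have checked only the $q^{r-1}(q-1)$ horizontal lines above realise $n_L=c$ (already the coefficient of $x^{c-1}$ in $x^c-\mbox{Tr}_r(mx)-\mbox{Tr}_r(b)$ vanishes when $r\ge 3$, which severely restricts the splitting type). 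Hence one cannot simply display $q^r-1$ maximal-section lines, and the honest route is to bound $\sum_{L\text{ non-vertical}}\binom{n_L}{d+2}$ below by $(q^r-1)\binom{c}{d+2}$ directly: either through convexity of $n\mapsto\binom{n}{d+2}$ applied to the precise distribution of the counts $\#\{x:\mbox{N}_r(x)-\mbox{Tr}_r(mx)=t\}$ (which average $q^{r-1}$ over the fibres of each pencil and are controlled by Weil-type bounds for the exponential sums $\sum_x\psi(\mbox{N}_r(x))\chi(x)$ attached to $\mbox{N}_r$ and $\mbox{Tr}_r$, many of them sitting only slightly below $c$), equivalently by enumerating collinear $(d+2)$-tuples on $Y_r$ via the $\F_q$-linearity of $x\mapsto\mbox{Tr}_r(mx+b)$ and the fibre structure of the norm map; or by constructing an explicit injection from a $(q^r-1)$-indexed family of $\binom{c}{d+2}$-element sets of collinear $(d+2)$-subsets that avoids the vertical pencil. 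Making this step precise is where the work lies.
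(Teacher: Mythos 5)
Your first paragraph is a correct, and in fact sharper, version of what the paper does: the paper also counts supports line by line and multiplies by $q^r-1$, and your argument that each collinear $(d+2)$-set carries \emph{exactly} $q^r-1$ minimum-weight codewords (the evaluation matrix on a line has rank exactly $d+1$, and the generator of its one-dimensional left kernel must have full support by Theorem \ref{te1}) is sound and worth making explicit, since the paper leaves the factor $q^r-1$ implicit. The vertical-line count $q^r\binom{q^{r-1}}{d+2}$ also agrees with the paper. The gap is the one you flag yourself: from non-vertical lines you only exhibit $q^{r-1}(q-1)\binom{c}{d+2}$ supports and leave the remaining $(q^{r-1}-1)\binom{c}{d+2}$ unestablished, so as written your argument proves only the weaker bound $(q^r-1)\bigl[q^r\binom{q^{r-1}}{d+2}+q^{r-1}(q-1)\binom{c}{d+2}\bigr]$ rather than the stated one, and the program you sketch for closing the deficit (convexity plus exponential-sum estimates, or an explicit injection) is not carried out.

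That said, your diagnosis of where the difficulty sits is exactly right and in fact exposes an overcount in the paper's own proof. The paper simply ``repeats the proof with horizontal lines and the norm map,'' asserting that every line $y=\alpha$ with $\alpha\neq 0$ contributes $\binom{c}{d+2}$ supports; but on such a line the curve reads $x^c=\mbox{Tr}_r(\alpha)$, which has $c$ solutions only when $\mbox{Tr}_r(\alpha)\neq 0$ and the single solution $x=0$ when $\mbox{Tr}_r(\alpha)=0$. Since there are $q^{r-1}-1$ nonzero elements in the kernel of the trace, only $q^{r-1}(q-1)=q^r-q^{r-1}$ horizontal lines meet $Y_r$ in $c$ affine rational points --- precisely your count. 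So the missing $(q^{r-1}-1)\binom{c}{d+2}$ term is not an idea you failed to find in the paper: the paper's argument does not justify it either, and the bound that this method actually delivers is the weaker one above.
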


\begin{proof}
 By Theorem \ref{te1} we know that the minimum distance of
$\mathcal{C}(d)^\perp$ is $d+2$ and that the points of the support of a
minimum-weight codeword are collinear. Pick out any $\alpha \in \F_{q^2}$ and
consider the line $L_\alpha$ of equation $x=\alpha$. The equation
$\mbox{Tr}_r(y)=\alpha$ has $q^{r-1}$ distinct solutions. Choose any distinct
affine
$d+2$ points $P_1,...,P_{d+2}$ in the intersection $Y_r(\F_{q^r}) \cap
L_\alpha$. The parity-check matrix of the code $\mathcal{C}(d)^\perp$ has at
most $d+1$ linearly independent rows (those associated to the monomials
$1,y,...,y^d$) and so there exist a dependent relation among the columns
associated to the points $P_1,...,P_{d+2}$, i.e. $\{P_1,...,P_{d+2}\}$ is the
support of a minimum-weight codewords of $\mathcal{C}(d)^\perp$ ($d+2$ is known
to be the minimum distance). In $H^0(C_r,\pi^*(\Ol_{Y_r}(d)))$ we have only
monomials $x^iy^j$ with the property $i \le d$. Hence we can repeat the proof
with horizontal lines and the norm map. In this case we can choose any line of
the form $y=\alpha$, provided that $\alpha \neq 0$.  The lower bounds in the
statement follow.
\end{proof}

\begin{remark}
If $d<q$ then Theorem \ref{te2} describes in fact one-point codes on norm-trace
curves.
 Indeed, by setting $s:=dc$ we get an identity of vector spaces
$L(sP_\infty)=H^0(C_r,\pi^*(\Ol_{Y_r}(d)))$ and so the one-point code $\mC_s$
obtained evaluating $L(sP_\infty)$ on $Y(\F_{q^r})\setminus \{ P_\infty\}$ is in
fact $\mC(d)$. This proves the following result.
\end{remark}

\begin{corollary}
 Let $s \ge 0$ be an integer. Write $s=dc-a$ with $0 \le a \le c-1$. Assume
$0<d<q^{r-1}-1$. The dual minimum distance of the one-point code $\mC_s$
obtained evaluating the vector space $L(sP_\infty)$ on $Y_r(\F_{q^r})\setminus
\{ P_\infty\}$ is $d+2$. If $d<q$ then the number of the minimum-weight
codewords of $\mathcal{C}_s^\perp$ code is at least $(q^r-1)\left[q^r
\binom{q^{r-1}}{d+2}+
(q^r-1)\binom{\frac{q^r-1}{q-1}}{d+2} \right]$.
\end{corollary}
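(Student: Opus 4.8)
The plan is to reduce the statement to Theorems~\ref{te1} and~\ref{te2} by trapping the (in general incomplete) evaluation code $\mC_s$ between two codes of the type $\mC(\cdot)$. Writing $s=dc-a$ with $0\le a\le c-1$ gives $(d-1)c<s\le dc$, hence the inclusions of Riemann--Roch spaces $L((d-1)cP_\infty)\subseteq L(sP_\infty)\subseteq L(dcP_\infty)$ and therefore $\mC_{(d-1)c}\subseteq\mC_s\subseteq\mC_{dc}$, which dualize to $\mC_{dc}^\perp\subseteq\mC_s^\perp\subseteq\mC_{(d-1)c}^\perp$. When $d<q$ one moreover has $L(kcP_\infty)=H^0(C_r,\pi^*(\Ol_{Y_r}(k)))$ for every $0\le k\le d$, so the two outer codes are exactly $\mC(d-1)$ and $\mC(d)$, of dual distances $d+1$ and $d+2$ by Theorem~\ref{te1}; this already yields $d+1\le d(\mC_s^\perp)\le d+2$ in that range, and the task is to pin the value down.

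For the inequality $d(\mC_s^\perp)\le d+2$ I would repeat the vertical-line argument from the proof of Theorem~\ref{te1}. The line $x=0$ carries $q^{r-1}$ affine rational points of $Y_r$, namely those with $\mathrm{Tr}_r(y)=0$, and $d\le q^{r-1}-2$ lets one select $d+2$ of them; among the basis monomials $x^iy^j$ of $L(sP_\infty)$ only those with $i=0$ fail to vanish on this line, and $y^j\in L(sP_\infty)$ forces $jc\le s\le dc$, so at most $d+1$ rows of the natural parity-check matrix of $\mC_s^\perp$ are nonzero on the chosen $d+2$ columns. Those columns are dependent and yield a codeword of weight at most $d+2$.

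For the reverse inequality $d(\mC_s^\perp)\ge d+2$, let $S$ be the support of a minimum-weight codeword, so that $S$ imposes dependent conditions on $L(sP_\infty)$, no proper subset of $S$ does, and hence $d(\mC_s^\perp)=\#S$. Since $0<d<q^{r-1}-1<c=\deg Y_r$, the pull-back--restriction map $\rho_d\colon H^0(\Pro^2,\Ol_{\Pro^2}(d))\to H^0(C_r,\pi^*(\Ol_{Y_r}(d)))$ is injective, so it suffices to show that $S$ imposes dependent conditions on the full space $H^0(C_r,\pi^*(\Ol_{Y_r}(d)))$ of degree-$d$ forms on $Y_r$: injectivity of $\rho_d$ then transports the dependency to $H^0(\Pro^2,\Ol_{\Pro^2}(d))$, i.e.\ gives $h^1(\Pro^2,\mathcal{I}_S(d))>0$, and Lemma~\ref{u00.01}(a) forces $\#S\ge d+2$. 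The genuinely delicate point --- and the one I expect to be the main obstacle --- is precisely this transfer of a dependency from the partial Riemann--Roch space $L(sP_\infty)$ to $H^0(C_r,\pi^*(\Ol_{Y_r}(d)))$; I would try to run the same book-keeping as in Theorem~\ref{te1}, showing that on each line through the relevant points the rank of the submatrix indexed by $S$ is controlled only by the ``pure $x$'' and ``pure $y$'' rows, which the two spaces share, so that $S$ cannot tell them apart.

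Finally, for the count (the case $d<q$): here $\mC_{dc}=\mC(d)$, so $\mC(d)^\perp\subseteq\mC_s^\perp$, and once $d(\mC_s^\perp)=d+2$ has been established, every minimum-weight codeword of $\mC(d)^\perp$ --- which has weight $d+2$ --- is simultaneously a minimum-weight codeword of $\mC_s^\perp$; distinct codewords stay distinct, so the lower bound of Theorem~\ref{te2} for $\mC(d)^\perp$ carries over verbatim to $\mC_s^\perp$, which is the asserted estimate. In short, everything hinges on the lower-bound step above; the sandwich and the passage of minimum-weight codewords from $\mC(d)^\perp$ to $\mC_s^\perp$ are then purely formal.
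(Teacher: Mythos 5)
Your sandwich only traps the dual distance in the interval $[d+1,d+2]$, and the step you yourself flag as ``the main obstacle'' --- transferring a dependency of $S$ from $L(sP_\infty)$ to $H^0(C_r,\pi^*(\Ol_{Y_r}(d)))$ and thence to $H^0(\Pro^2,\mathcal{O}_{\Pro^2}(d))$ --- is a genuine gap that the book-keeping you sketch cannot close. The difficulty is structural: $L(sP_\infty)$ is in general \emph{not} contained in the space of restrictions of degree-$d$ plane forms (it contains monomials $x^iy^j$ with $iq^{r-1}+jc\le s$ but $i+j>d$), so there is no injective restriction map through which a dependency imposed on $L(sP_\infty)$ could be pushed into $h^1$ of an ideal sheaf, and Lemma \ref{u00.01} never sees the space you are actually evaluating. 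The paper closes the lower bound in the opposite direction, with no new cohomology at all: it invokes the inclusion $H^0(C_r,\pi^*(\Ol_{Y_r}(d)))\subseteq L(sP_\infty)$ recorded in the Remark preceding Theorem \ref{te1}, which gives $\mC(d)\subseteq\mC_s$, hence $\mC_s^\perp\subseteq\mC(d)^\perp$, so every nonzero codeword of $\mC_s^\perp$ already has weight at least $d+2$ by Theorem \ref{te1}. Your counting argument for $d<q$ is formally fine, but it is conditional on the lower bound you did not establish.

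Be aware, moreover, that the missing step is not merely hard: it fails for $a\ge 1$. In that case $y^d\notin L(sP_\infty)$ (its pole order at $Q_\infty$ is $dc>s$), so the only basis monomials of $L(sP_\infty)$ surviving on the line $x=0$ are $1,y,\dots,y^{d-1}$, and already $d+1$ collinear points support a dual codeword; this is exactly what Corollary \ref{one} later records (dual distance $d+1$ when $a=1$). Your own upper-bound computation (``$jc\le s$'') shows this if you follow it through. Correspondingly, the inclusion $\mC(d)\subseteq\mC_s$ that powers the paper's one-line argument is only valid for $a=0$, i.e.\ $s=dc$; for that value of $s$ your plan can be completed by replacing the dependency-transfer step with the dual-inclusion argument, but no argument can yield the bound $\ge d+2$ for all $0\le a\le c-1$.
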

\begin{proof}
 The minimum distance of $\mC_s^\perp$ is at least the minimum distance of
$\mC(d)^\perp$, which is $d+2$. Since in $L(sP_\infty)$ we have only the
monomials $y^i$ with $i \le d$ this weight is reached on vertical lines as in
the proof of Theorem \ref{te1}. If $d<q$ then apply Theorem \ref{te2}.
\end{proof}

\begin{example}\label{ex1}
 Set $q:=2, r:=3$ and $d:=2$. The code $\mC(d)^\perp$ can be studied by writing
a simple \texttt{Magma} program. The minimum distance is $4$. If $d:=1$ then
$\mC(d)$ has dual
minimum distance $3$ and the number of the minimum-weight codewords of
$\mathcal{C}(d)^\perp$ is $3360$.
\end{example}

\section{A few remarks on Goppa codes} \label{si}
Let $q$ be a prime power and let $\Pro^k$ be the projective space of dimension
$k$ over the field
 $\F_q$. Consider a smooth curve $X\subseteq \Pro^k$  and a divisor $D$ on it. 
Take points $P_1,...,P_n \in X(\F_q)$ avoiding the support of $D$ and
set
 $\overline{D}:=\sum_{i=1}^n P_i$.   
The code $\mathcal{C}(\overline{D},D)$ is defined to be the code obtained
evaluating the vector 
space $L(D)$ at the points $P_1,...,P_n$ (see \cite{Ste}). 
These codes were introduced in 1981 by Goppa, who was interested in studying
their dual codes. 
Since a norm-trace curve $Y_r$ is not a smooth curve, 
when writing ``~Goppa code on $Y_r$~''  we mean ``~Goppa code on $C_r$~'' 
(the normalization of $Y_r$). The points of $Y_r$ will be identified with those
of $C_r$ through 
the injectivity of the normalization $\pi:C_r \to Y_r$.

\begin{definition}\label{si}
 Let $q$ be a prime power. We say that codes $\mathcal{C}, \mathcal{D}$ on the
same field $\F_q$ and of the same lenght are \textbf{strongly isometric}
if there exists a vector ${\bf{x}}=(x_1,...,x_n)\in \F_q^n$ of non-zero
components 
such that $$\mathcal{C}={\bf{x}}\mathcal{D}:=
 \{(x_1v_1,...,x_nv_n)\in \F_q^n \ \mbox{ s.t. } \ (v_1,...,v_n) \in \mathcal{D}
\}.$$
The notation will be $\mathcal{C} \sim \mathcal{D}$ and this defines of course
an equivalence relation.
\end{definition}
\begin{remark}\label{duuu}
 Take the setup of Definition \ref{si}. Then $\mathcal{C} \sim \mathcal{D}$
 if and only if $\mathcal{C}^\perp \sim \mathcal{D}^\perp$. Indeed, if
$\mathcal{C}={\bf{x}}\mathcal{D}$
then $\mathcal{C}^\perp={\bf{x}}^{-1}\mathcal{D}^\perp$, where
${\bf{x}}^{-1}:=(x_1^{-1},...,x_n^{-1})$.
A strongly isometry of codes preserves in fact the minimum distance of a code,
its weight distribution
and the supports of its codewords. 
\end{remark}

\begin{remark}\label{ct}
 Take the setup of the beginning of the section. Let $D$ and $D'$ be divisors on
$X$ and take points 
$P_1,...,P_n \in X(\F_q)$ 
avoiding both the supports of $D$ and $D'$.
 Set $\overline{D}:=\sum_{i=0}^n P_i$. It is known (see \cite{mp}, Remark 2.16)
that if 
$D\sim D'$ (as divisors)
 then  $\mathcal{C}(\overline{D},D) \sim \mathcal{C}(\overline{D},D')$.
By Remark \ref{duuu} we have also $\mathcal{C}(\overline{D},D)^\bot \sim
\mathcal{C}(\overline{D},D')^\bot$.
\end{remark}

\section{One-point codes}

\begin{definition}\label{def_a}
 Let $0<d<q^{r-1}-1$ and $a \ge 0$ be integers. We denote by $\mC(d,a)$ the code
obtained evaluating $H^0(C_r,\pi^{*}(\Ol_{Y_r}(d)(-aP_\infty)))$ on the set
$B:=Y_r(\F_{q^r})\setminus \{ P_\infty \}$.
\end{definition}

\begin{theorem} \label{teo_a}
 Let $\mC(d,a)$ be as in Definition \ref{def_a}. Assume $a=1$. Then the minimum
distance
of $\mC(d,a)^\perp$ is $d+1$ and the number of the minimum-weight codewords of
$\mC(d,a)^\perp$ is exactly $(q^r-1)q^r\binom{q^{r-1}}{d+1}$.
\end{theorem}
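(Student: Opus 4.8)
The plan is to refine the arguments of Theorems \ref{te1} and \ref{te2}: the extra vanishing condition at $P_\infty$ will force the supports of the minimum-weight codewords of $\mC(d,1)^\perp$ onto vertical lines, and since every vertical line carries the same number of rational points of $Y_r$ this upgrades the lower bound of Theorem \ref{te2} to an exact count. Write $V:=H^0(C_r,\pi^{*}(\Ol_{Y_r}(d)(-P_\infty)))$. Since $0<d<q^{r-1}-1$, the monomials $x^iy^j$ with $i,j\ge 0$ and $i+j\le d$ form a basis of $H^0(C_r,\pi^{*}(\Ol_{Y_r}(d)))$, and the homogenization $x^iy^jz^{d-i-j}$ of such a monomial vanishes at $P_\infty=(0:1:0)$ precisely when $(i,j)\neq(0,d)$; hence $V$ has basis $\{x^iy^j:\ i,j\ge 0,\ i+j\le d,\ (i,j)\neq(0,d)\}$. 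The restriction-and-pull-back map $\rho_d$ from the proof of Theorem \ref{te1} is injective (as $d<c$) and sends the monomial basis of $H^0(\Pro^2,\Ol_{\Pro^2}(d))$ onto that of $H^0(C_r,\pi^{*}(\Ol_{Y_r}(d)))$, so it restricts to an isomorphism $H^0(\Pro^2,\mathcal{I}_{P_\infty}(d))\cong V$; recall also that $h^1(\Pro^2,\mathcal{I}_{P_\infty}(d))=0$, a single point imposing independent conditions on $\Ol_{\Pro^2}(d)$.

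First I would compute the minimum distance. For the upper bound, fix $\alpha\in\F_{q^r}$ and consider the vertical line $L_\alpha:\ x=\alpha$. The equation $\mbox{Tr}_r(y)=\mbox{N}_r(\alpha)$ has exactly $q^{r-1}$ solutions and $d+1\le q^{r-1}-1$, so there are $d+1$ distinct points $P_1,\dots,P_{d+1}\in Y_r(\F_{q^r})\cap L_\alpha\subseteq B$. On $L_\alpha$ every basis monomial of $V$ restricts to a scalar multiple of one of $1,y,\dots,y^{d-1}$ (the monomial $y^d$ is absent from $V$), so $V$ maps into a space of dimension $\le d$; thus $P_1,\dots,P_{d+1}$ impose dependent conditions on $V$ and support a nonzero codeword of $\mC(d,1)^\perp$ of weight $\le d+1$. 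For the lower bound, take $S\subseteq B$ with $\#S\le d$. Via the isomorphism above, the exact sequence $0\to\mathcal{I}_{S\cup\{P_\infty\}}(d)\to\mathcal{I}_{P_\infty}(d)\to\Ol_S\to 0$ (note $P_\infty\notin S$) and $h^1(\Pro^2,\mathcal{I}_{P_\infty}(d))=0$, the set $S$ imposes independent conditions on $V$ if and only if $h^1(\Pro^2,\mathcal{I}_{S\cup\{P_\infty\}}(d))=0$; since $\deg(S\cup\{P_\infty\})=\#S+1\le d+1$, Lemma \ref{u00.01}(a) gives exactly this vanishing. Hence no set of size at most $d$ supports a codeword, and the minimum distance of $\mC(d,1)^\perp$ is $d+1$.

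Next I would enumerate the minimum-weight codewords. If $S$ supports one of them then $\#S=d+1$, so $z:=\deg(S\cup\{P_\infty\})=d+2$ lies in the range $d+2\le z\le 2d+1$ and $h^1(\Pro^2,\mathcal{I}_{S\cup\{P_\infty\}}(d))>0$; by Lemma \ref{u00.01}(b) there is a line $L$ with $\deg(L\cap(S\cup\{P_\infty\}))\ge d+2$, i.e. $S\cup\{P_\infty\}\subseteq L$. As $S$ contains at least two affine points, $L$ is not the line at infinity, so $L$ is a vertical line $L_\alpha$ and $S$ is a $(d+1)$-subset of the $q^{r-1}$ points of $Y_r(\F_{q^r})\cap L_\alpha$. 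Distinct values of $\alpha$ give disjoint sets of such points, so there are exactly $q^r\binom{q^{r-1}}{d+1}$ possible supports. Conversely every such $S$ is a minimum-weight support: the restriction of $V$ to $L_\alpha$ is exactly $\langle 1,y,\dots,y^{d-1}\rangle$, and evaluating this $d$-dimensional space at the $d+1$ points of $S$ has rank exactly $d$, since a polynomial of degree $\le d-1$ vanishing at $d+1$ points is zero. Therefore $\{v\in\mC(d,1)^\perp:\ \mathrm{supp}(v)\subseteq S\}$ has dimension $(d+1)-d=1$; as there is no codeword of weight $\le d$, all $q^r-1$ of its nonzero vectors have support exactly $S$. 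Multiplying, the number of minimum-weight codewords is $(q^r-1)q^r\binom{q^{r-1}}{d+1}$.

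The one slightly delicate point is the identification in the first paragraph: although $P_\infty$ is the singular point of $Y_r$, the space $V$ must be seen to be exactly the $\rho_d$-image of the plane degree-$d$ forms through $P_\infty$, so that $S\cup\{P_\infty\}$ can legitimately be treated as a zero-dimensional subscheme of $\Pro^2$ and Lemma \ref{u00.01} applied to it. Granting this, the rest is the bookkeeping above, the structural heart being that the prescribed zero at $P_\infty$ promotes the ``collinear support'' of Theorem \ref{te1} to ``support on a line through $P_\infty$'', i.e. on a vertical line, which is precisely what makes the enumeration exact rather than a lower bound.
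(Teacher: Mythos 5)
Your proof is correct and follows essentially the same route as the paper: reduce via the injective restriction map to the cohomology of $\mathcal{I}_{S\cup\{P_\infty\}}(d)$ in $\Pro^2$, apply Lemma \ref{u00.01} to force $S\cup\{P_\infty\}$ onto a vertical line, and match this with the upper bound from vertical lines. You are in fact more explicit than the paper on the final enumeration (checking that every $(d+1)$-subset of a vertical line is realized as a support and carries exactly $q^r-1$ codewords), which the paper's proof leaves implicit.
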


\begin{proof}
 Since $0<a \le
d$ if a monomial $x^iy^j$ is in the vector space
$H^0(C_r,\pi^*(\Ol_{Y_r}(d)(-aP_\infty)))$ then we must have $j \le d-1$ (we
work up to a homogeneization). On the other hand,
$1,y,...,y^{d-1}$ are in any case in this space. As in the proof of Theorem
\ref{te1}, any $d+1$
affine points in the intersection of $Y_r(\F_{q^r})$ and a vertical line of
equation $x=\alpha$ contain the support of a codeword of $\mC(d,a)^\perp$. Hence
the minimum distance of $\mC(d,a)^\perp$ is at most $d+1$. Let $S\subseteq
Y_r(\F_{q^r})$ be the support of a minimum-weight codeword of $\mC(d,a)^\perp$.
The minimum distance of this code is exactly $\sharp(S)$. Since $d<q^{r-1}-1<c$
the restriction (and pull-back) map
$$\rho_{d,a}:H^0(\Pro^2,\mathcal{O}_{\Pro^2}(d)(-aP_\infty)) \to
H^0(C_r,\pi^*(\Ol_{Y_r}(d)(-P_\infty)))$$
 is injective. Since $S$ imposes dependent conditions to the vector space 
$H^0(C_r,\pi^*(\Ol_{Y_r}(d)(-P_\infty)))$ then it has to impose dependent
conditions also to $H^0(\Pro^2,\mathcal{O}_{\Pro^2}(d)(-P_\infty))$, i.e.
$h^1(\Pro^2,\mathcal{I}_{P_\infty \cup
S}(d))>h^1(\Pro^2,\mathcal{I}_{P_\infty}(d))$. In particular we have
$h^1(\Pro^2,\mathcal{I}_{P_\infty \cup S})>0$. Observe
that $\sharp(S)+a \le d+1+1=d+2$. By Lemma \ref{u00.01} we get the existence of
a line $L \subseteq \Pro^2$ such that $\deg(L \cap (P_\infty \cup S)) \ge d+2$.
Since $\sharp(S) \le d+1$ we deduce $P_\infty \subseteq L$ (as schemes). Hence
$L$ is either the line at infinity, or a vertical line. The line at infinity
does not intersect $Y_r$ at any affine point, so $L$ has to be a vertical line.
It follows 
$$\sharp(S) \ge \deg(L \cap S) \ge d+2-\deg(L \cap P_\infty)=d+2-1=d+1.$$
Since we have shown that $\sharp(S) \le d+1$,  the minimum distance of
$\mC(d,a)^\perp$ is exactly $d+1$ and $S$ consists of $d+1$ points on a vertical
line.
\end{proof}

\begin{corollary}\label{one}
Let $\mathcal{C}_s$ be the one-point code on $Y_r$ obtained evaluating the
vector space $L(sP_\infty)$ on the rational points of $Y_r$ different from
$P_\infty$. Divide $s$ by $c$ with remainder and write $s=dc-a$ with $0 \le a
\le c-1$. Assume $0 < d < q^{r-1}-1$ and $a \le d$.
\begin{enumerate}
 \item If $a=0$ then the minimum distance of $\mC_s^\perp$ is $d+2$.
\item If $a=1$ then the minimum distance of $\mC_s^\perp$ is $d+1$.
\item If $1<a \le d$ then the minimum distance of $\mC_s^\perp$ is at least
$d+2-a$ and at most $d+1$.
\end{enumerate} 
\end{corollary}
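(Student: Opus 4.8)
The plan is to sandwich $\mC_s$ between the plane evaluation codes of Definitions \ref{def1} and \ref{def_a}, whose dual minimum distances are already pinned down by Theorems \ref{te1} and \ref{teo_a}, and then recover the missing upper bound by the vertical–line trick used repeatedly above.

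First I would record the basic inclusion. With $s=dc-a$, a polynomial in $x,y$ of degree $\le d$ whose pull-back to $C_r$ vanishes at $Q_\infty$ with multiplicity $\ge a$ has a pole of order $\le dc-a=s$ there, hence lies in $L(sP_\infty)$; concretely $H^0(C_r,\pi^*(\Ol_{Y_r}(d)(-aP_\infty)))$ is spanned by the monomials $x^iy^j$ with $i+j\le d$ and $iq^{r-1}+jc\le s$, and this space sits inside $L(sP_\infty)$. Thus $\mC(d,a)\subseteq\mC_s$ (both evaluated on the same $q^{2r-1}$ points), so $\mC_s^\perp\subseteq\mC(d,a)^\perp$ and the minimum distance of $\mC_s^\perp$ is at least that of $\mC(d,a)^\perp$. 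For $a=0$ we have $\mC(d,0)=\mC(d)$, so Theorem \ref{te1} gives that the minimum distance of $\mC_s^\perp$ is $\ge d+2$; conversely $1,y,\dots,y^d\in L(dcP_\infty)$, so on the $q^{r-1}\ge d+2$ points of $Y_r$ on a vertical line $x=\alpha$ the space $L(dcP_\infty)$ restricts onto $\langle 1,\dots,y^d\rangle$ of dimension $d+1$, and any $d+2$ of those points (which have pairwise distinct $y$-coordinates) support a codeword of $\mC_s^\perp$ of weight $\le d+2$; this settles (1). For $a=1$, Theorem \ref{teo_a} gives that the minimum distance of $\mC(d,1)^\perp$ is $d+1$, so that of $\mC_s^\perp$ is $\ge d+1$.

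Now let $1<a\le d$. For the inequality ``$\le d+1$'': since $a\ge 1$ we have $s<dc$, so $y^d\notin L(sP_\infty)$, and more generally every monomial $x^iy^j\in L(sP_\infty)$ has $y$-degree $\le d-1$ (because $jc\le iq^{r-1}+jc\le s<dc$); hence $L(sP_\infty)$ restricts onto the $d$-dimensional space $\langle 1,\dots,y^{d-1}\rangle$ on the $q^{r-1}\ge d+1$ points of $Y_r$ on a vertical line, so any $d+1$ of those points support a codeword of $\mC_s^\perp$ of weight $\le d+1$ (this also completes (2)). For the inequality ``$\ge d+2-a$'': since $q^{r-1}<c$ one has $iq^{r-1}+jc\le(i+j)c$, and since $c\ge 2$ and $a\ge 2$ one has $(a-1)c\ge a$, i.e.\ $(d-a+1)c\le dc-a=s$; hence every monomial $x^iy^j$ with $i+j\le d-a+1$ already satisfies $iq^{r-1}+jc\le s$, so $\mC(d-a+1)\subseteq\mC(d,a)$. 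As $1\le d-a+1\le d<q^{r-1}-1$, Theorem \ref{te1} applies to $\mC(d-a+1)$ and gives minimum distance $(d-a+1)+2=d-a+3$ for its dual; chaining the inclusions, the minimum distance of $\mC_s^\perp$ is $\ge d-a+3\ge d-a+2$. Together with the upper bound this proves (3). (If one wants exactly $d+2-a$, one may instead use the coarser inclusion $\mC(d-a)\subseteq\mC(d,a)$, valid for $a<d$, and treat $a=d$ via the repetition subcode of $\mC(d,d)$, whose dual has minimum distance $2$.)

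There is no deep obstacle here: the argument is a sequence of code inclusions feeding into Theorems \ref{te1} and \ref{teo_a}. The points that require care are checking that the comparison code $\mC(d-a+1)$ (resp.\ $\mC(d-a)$) still satisfies the hypothesis $0<d'<q^{r-1}-1$ of Theorem \ref{te1}, and verifying the elementary pole-order inequality $(a-1)c\ge a$ behind the crucial inclusion $\mC(d-a+1)\subseteq\mC(d,a)$; both hold for every admissible $q$ and $r$. With the coarser inclusion the case $a=d$ is degenerate and needs the one-line remark above, whereas the inclusion $\mC(d-a+1)\subseteq\mC(d,a)$ avoids this exception entirely.
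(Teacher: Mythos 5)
Your proposal is correct, and for parts (1) and (2) it coincides with the paper's argument: the inclusion $H^0(C_r,\pi^*(\Ol_{Y_r}(d)(-aP_\infty)))\subseteq L(sP_\infty)$ gives $\mC(d,a)\subseteq \mC_s$, the lower bounds come from Theorems \ref{te1} and \ref{teo_a}, and the upper bounds are realized on vertical lines because every monomial of $L(sP_\infty)$ has $y$-degree at most $d$ (resp.\ $d-1$ when $a\ge 1$). Where you genuinely diverge is part (3). The paper obtains the bound $d+2-a$ by re-running the cohomological proof of Theorem \ref{teo_a} with the degree-$a$ scheme supported at $P_\infty$: if $S$ is the support of a minimum-weight codeword, then $h^1(\Pro^2,\mathcal I_{aP_\infty\cup S}(d))>0$, Lemma \ref{u00.01} produces a line $L$ with $\deg(L\cap(aP_\infty\cup S))\ge d+2$, and one subtracts $\deg(L\cap aP_\infty)\le a$. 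You instead avoid any new $h^1$ computation: the elementary estimate $iq^{r-1}+jc\le (i+j)c\le (d-a+1)c\le dc-a=s$ (valid precisely because $(a-1)c\ge a$ for $a\ge 2$) gives the monomial inclusion $\mC(d-a+1)\subseteq\mC(d,a)\subseteq\mC_s$, and Theorem \ref{te1}, whose hypothesis $0<d-a+1<q^{r-1}-1$ you correctly check, is quoted as a black box. This is not only more elementary but strictly sharper: it yields the lower bound $d+3-a$ rather than $d+2-a$, so together with the upper bound $d+1$ it determines the minimum distance exactly (namely $d+1$) in the case $a=2$, something the paper's statement does not claim. The only mild redundancy is your closing parenthetical about $\mC(d-a)$ and the case $a=d$, which is unnecessary given that the inclusion $\mC(d-a+1)\subseteq\mC(d,a)$ already covers all $1<a\le d$ uniformly.
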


\begin{proof}
Since $s=dc-a$ we have a linear equivalence $sP_\infty \sim
dcP_\infty-aP_\infty$. Since $0<d<q^{r-1}-1$ the minimum distance of
$\mC_s^\perp$ is at least the minimum distance of $\mC(d,a)^\perp$, because of
the inclusion
$$H^0(C_r,\pi^*(\Ol_{Y_r}(d))(-aP_\infty))\subseteq L(sP_\infty).$$ If $a \in \{
0,1\}$ then as in the proof of Theorem \ref{te1} and Theorem \ref{teo_a} this
minimum distance is reached on vertical lines (the monomials of the form $y^i$
appearing in $L(sP_\infty)$ and in $H^0(C_r,\pi^*(\Ol_{Y_r}(d))(-aP_\infty))$
are the same). If $1<a \le d$ then we can repeat the proof of Theorem
\ref{teo_a} into a slightly general context.
\end{proof}

\begin{remark}
It could be pointed out that Corollary \ref{one} describes in fact also some
classical Goppa one-point codes arising from norm-trace curves
(and not only the dual codes of such kind of codes). Indeed, norm-trace curves
turn out to be a particular case of Castle curves and so (\cite{cast},
Proposition 5) we get a strong isometry of one-point codes $\mC_s^\perp \sim
\mC_{n+2g-2-s}$, in the sense of Definition \ref{si}, with $n=q^{2r-1}$ and
$2g-2=(q^{r-1}-1)(c-1)-2$. It follows that the metric properties of 
$\mC_{n+2g-2-s}$ are those of $\mC_s^\perp$.
\end{remark}

\section{Two-point codes}\label{two}

Let $P_0$ denote the point of $Y_r$ of projective coordinates $(0:0:1)$. In this
section we study codes obtained by using zero-dimensional plane schemes
supported by
$P_\infty$ and $P_0$. The results can be applied to study several two-point
codes on norm-trace curves (as we will explain in details).

\begin{definition}\label{def_ab}
 Let $0<d<q^{r-1}-1$ be an integer. Choose integers $a,b \ge 0$. We denote by
$\mC(d,a,b)$ the code obtained evaluating the vector space
$H^0(C_r,\pi^{*}(\Ol_{Y_r}(d)(-aP_\infty-bP_0)))$ on the set
$B:=Y_r(\F_{q^r})\setminus \{ P_\infty , P_0\}$.
\end{definition}

\begin{lemma}\label{reduc}
Let $\mC(d,a,b)$ be a code of Definition \ref{def_ab}. Assume $d>1$. If $b>d$
then $\mC(d,a,b)$ is strongly isometric to the code $\mC(d-1,a,0)$. Hence
$\mC(d,a,b)^\perp$ is strongly isometric to the code $\mC(d-1,a,0)^\perp$ (see
Remark \ref{duuu}).
\end{lemma}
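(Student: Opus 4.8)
The plan is to work on the normalization $C_r$ and exhibit an explicit linear equivalence of divisors that, after evaluation, produces the required strong isometry. The key observation is that on $Y_r$ the coordinate function $y$ has a zero of high order at $P_0$ and a pole only at $P_\infty$; more precisely, since $y \in L(cQ_\infty)$ and $\mathrm{ord}_{Q_\infty}(y) = -c$, while the affine equation $x^c = \mathrm{Tr}_r(y)$ forces $y$ to vanish at $P_0 = (0:0:1)$. One should first compute $\mathrm{div}(y)$ on $C_r$: the pole divisor is $cQ_\infty$, and because $P_0$ is the only rational affine point with $x=y=0$ lying on the curve with the tangent behaviour dictated by $x^c = y + y^q + \cdots$, the function $y$ has a zero of order exactly $q^{r-1}$ at $P_0$ (matching $\mathrm{ord}_{P_0}(x) = 1$ through $x^c = \mathrm{Tr}_r(y)$, since the lowest-degree term of $\mathrm{Tr}_r(y)$ is $y$ and $c = (q^r-1)/(q-1) \ge q^{r-1}$... actually the valuations must satisfy $c \cdot \mathrm{ord}_{P_0}(x) = \mathrm{ord}_{P_0}(y)$, and one checks $\mathrm{ord}_{P_0}(x) = 1$, giving $\mathrm{ord}_{P_0}(y) = c$ — this valuation computation is the first technical point to nail down).

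Next I would translate the divisor-theoretic hypothesis. The vector space $H^0(C_r,\pi^*(\Ol_{Y_r}(d)(-aP_\infty - bP_0)))$ consists of degree-$d$ forms on $Y_r$ vanishing to order $\ge a$ at $P_\infty$ and $\ge b$ at $P_0$; concretely, a form of degree $d$ restricted to the curve has divisor of degree $dc$ (since $\deg Y_r = c$), supported away from the affine points other than $P_0$ unless we force extra vanishing. The crucial reduction is: when $b > d$, a degree-$d$ form $F$ with $\mathrm{ord}_{P_0}(F|_{C_r}) \ge b$ must be divisible (as a section) by the function $y$, because the line $L_0: \{x = 0\}$ meets $Y_r$ at $P_0$ with the intersection multiplicity governed by $y$, and a degree-$d$ curve forced to vanish at $P_0$ to order exceeding $d$ times the "unit" contribution has to contain the relevant component — here the right statement is that $F = x \cdot G$ (or $F/x = G$ as a section of $\Ol_{Y_r}(d-1)$) after dividing by the linear form $x$, since $x$ has divisor on $C_r$ equal to (affine zeros on $L_0$) $- cQ_\infty$, and the affine zeros of $x$ on $Y_r(\F_{q^r})$ all have $y$-coordinate a root of $\mathrm{Tr}_r(y)=0$; one of them is $P_0$. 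So I would show the evaluation map on $B$ factors: evaluating $H^0(\Ol_{Y_r}(d)(-aP_\infty - bP_0))$ at the points of $B$ equals evaluating $H^0(\Ol_{Y_r}(d-1)(-aP_\infty))$ at $B$, componentwise multiplied by the nonzero scalars $x(P)$, $P \in B$ — which is precisely a strong isometry in the sense of Definition \ref{si}, since $x(P) \ne 0$ for every $P \in Y_r(\F_{q^r}) \setminus (\{P_\infty\} \cup \{x=0\})$, and we have removed $P_0$ from the evaluation set while the other points on $\{x=0\}$ still have $x(P)=0$ — so this needs care.

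The main obstacle I expect is exactly that last point: the evaluation set $B = Y_r(\F_{q^r}) \setminus \{P_\infty, P_0\}$ still contains the $q^{r-1}-1$ other affine points on the line $\{x=0\}$, at which $x$ vanishes, so multiplication by the vector $(x(P))_{P \in B}$ is \emph{not} a strong isometry as stated. The resolution must be that for those points $P$ with $x(P) = 0$ (and $P \ne P_0$), every section of $H^0(\Ol_{Y_r}(d-1)(-aP_\infty))$ whose image under multiplication by $x$ lies in $H^0(\Ol_{Y_r}(d)(-aP_\infty - bP_0))$ automatically gives a codeword vanishing at $P$ on both sides — equivalently, I should instead factor through the function $y$, not $x$: dividing by $y$ gives $F/y \in H^0(\Ol_{Y_r}(d-1)(-(a - \text{something})P_\infty - (b-c)P_0))$ using $\mathrm{div}(y) = cP_0 - cQ_\infty$ on $C_r$, which after homogenizing by a power of $x$ lands back in a $\mC(d-1,a,0)$-type space, and $y(P) \ne 0$ for all $P \in B$ precisely because the points with $y = 0$ on $Y_r$ are exactly those on $\{x = 0\}$ — wait, that is the same locus. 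The honest difficulty is reconciling the degree bookkeeping so that the multiplier vector has \emph{all nonzero} entries on $B$; I would resolve it by checking that $\{y = 0\} \cap Y_r = \{x=0\} \cap Y_r$ is false — rather $\{y=0\}$ meets $Y_r$ only at $P_0$ (with multiplicity $c$) and at $P_\infty$, so the multiplier vector $(y(P))_{P\in B}$ \emph{does} have all nonzero components, and the identity $\mathrm{div}_{C_r}(y) = cP_0 - cQ_\infty$ lets me rewrite $H^0(\pi^*\Ol_{Y_r}(d)(-aP_\infty-bP_0)) = y \cdot H^0(\pi^*\Ol_{Y_r}(d)(-aP_\infty-(b-c)P_0) - cQ_\infty)$... and then matching $d$ versus $d-1$ via the linear form whose divisor is $cQ_\infty$ minus affine points. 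Getting this divisor arithmetic to close cleanly — in particular confirming $b > d$ (rather than $b > $ some other threshold) is exactly the condition under which the quotient lands in the $\mC(d-1,a,0)$ space with no residual base conditions — is where the real work lies, and I would spend most of the proof there; the strong-isometry conclusion and the passage to duals (via Remark \ref{duuu}) are then immediate.
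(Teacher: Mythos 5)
Your proposal circles the right idea --- divide by the tangent line to $Y_r$ at $P_0$ --- but it never carries out the one step on which the lemma actually rests, and it explicitly defers it (``where the real work lies''). That step is: if $b>d$, then every $f\in H^0(C_r,\pi^{*}(\Ol_{Y_r}(d)(-aP_\infty-bP_0)))$ is divisible, \emph{as a degree-$d$ form}, by the linear form $y$. The paper's proof is exactly this: the tangent line to $Y_r$ at the smooth point $P_0$ is $y=0$, it has contact order $c$ with the curve, and a degree-$d$ form not containing that line would meet it with multiplicity at most $d$ at $P_0$, whereas vanishing to order $\ge b>d$ along the curve (where $\mathrm{ord}_{Q_0}(y)=c>d$) forces multiplicity at least $\min(b,c)>d$ --- so the line divides the form. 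Your closing ``divisor arithmetic'' with the rational function $y$ and $\mathrm{div}(y)=cP_0-cQ_\infty$ does not substitute for this: it only identifies Riemann--Roch spaces up to linear equivalence and says nothing about why the quotient $f/y$ is a \emph{global section of $\Ol_{Y_r}(d-1)$} rather than a mere rational function; the hypothesis $b>d$ enters only through the Bezout argument against the tangent line, which is absent from your write-up.

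Two further points of confusion would need to be cleaned out. First, the long detour through the line $x=0$ and the factorization $F=x\cdot G$ is a dead end: $x=0$ is not the tangent line at $P_0$, it meets $Y_r$ in $q^{r-1}$ rational affine points, and $x$ vanishes only to order $1$ at $P_0$; you correctly notice the multiplier vector $(x(P))_{P\in B}$ has zero entries, but the fix is not a repair of that route --- it is to abandon it for $y$. Second, the line $y=0$ does \emph{not} pass through $P_\infty=(0:1:0)$ (it contains $(1:0:0)\notin Y_r$); the correct statement, which the paper uses, is that $\{y=0\}\cap Y_r=\{P_0\}$ with multiplicity $c$, whence $y(P)\neq 0$ for every $P\in B$ and multiplication by the vector $(y(P))_{P\in B}$ is a strong isometry in the sense of Definition \ref{si}. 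With the divisibility claim proved and these slips removed, your argument would coincide with the paper's.
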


\begin{proof}
 Keep in mind that $\mC(d,a,b)$ is the code obtained evaluating
$H^0(C_r,\pi^{*}(\Ol_{Y_r}(d)(-aP_\infty-bP_0)))$ on $B:=Y_r(\F_{q^r})\setminus
\{ P_\infty , P_0\}$. The curve $Y_r$ is smooth at $P_0$ and the tangent line to
$Y_r$ at $P_0$ has equation $y=0$. This line has contact order $c$ with $Y_r$
and does not
intersect $Y_r$ in any rational point different from $P_0$. Since $b>d$, if $f
\in
H^0(C_r,\pi^{*}(\Ol_{Y_r}(d)(-aP_\infty-bP_0)))$ then ($\pi^*$ is injective) $f$
is a degree $d$ form which is divided by $y$, the equation of the tangent line.
Hence the codes obtained evaluating
$H^0(C_r,\pi^{*}(\Ol_{Y_r}(d)(-aP_\infty-bP_0)))$ on $B$ and that obtained
evaluating $H^0(C_r,\pi^{*}(\Ol_{Y_r}(d-1)(-aP_\infty)))$ on $B$ are in fact
strongly isometric. 
\end{proof}

\begin{theorem}\label{teo_ab}
 Let $\mC(d,a,b)$ be as in Definition \ref{def_ab}. If $b>d$ then assume
$d>1$, set $b':=0$ and $d':=d-1$. Otherwise set $b':=b$ and $d':=d$. In any case
set $a':=a$. Assume $a' \in \{ 0,1 \}$.
\begin{enumerate}
 \item If $a'=0$ and $b'>0$ then the minimum distance of $\mC(d,a,b)^\perp$
is $d'+1$ and the number of the minimum-weight codewords of $\mC(d',0,b')^\perp$
is at least $(q^r-1)\binom{q^{r-1}-1}{d'+1}$.
\item If $b'=0$ and $a'=1$ then the minimum distance of $\mC(d,a,b)^\perp$
is $d'+1$ and the number of the minimum-weight codewords of $\mC(d',1,0)^\perp$
is exactly $$(q^r-1) \left[
(q^r-1)\binom{q^{r-1}}{d'+1}+\binom{q^{r-1}-1}{d'+1}\right].$$
\item If $a'=1$ and $b'>0$ then the minimum distance of $\mC(d,a,b)^\perp$
is $d'$ and the number of the minimum-weight codewords of $\mC(d,a,b)^\perp$
is exactly
$(q^r-1)\binom{q^{r-1}-1}{d'}$
\end{enumerate}
\end{theorem}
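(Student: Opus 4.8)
The plan is to reduce everything to the already-established one-point results and then count supports. First, the reduction: by Lemma~\ref{reduc}, when $b>d$ (and $d>1$) the code $\mC(d,a,b)$ is strongly isometric to $\mC(d-1,a,0)=\mC(d',a',0)$, so by Remark~\ref{duuu} its dual has the same minimum distance and weight distribution; hence in this case the three items follow from the corresponding statements about $\mC(d',a',0)^\perp$, which is covered by Theorems~\ref{te1}, \ref{te2} and \ref{teo_a}. So the substantive work is the case $b'=b\le d$, $d'=d$, with $a'=a\in\{0,1\}$.

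In that case I would run the same geometric argument as in Theorems~\ref{te1} and~\ref{teo_a}, but now tracking the scheme $aP_\infty+bP_0$ instead of just $aP_\infty$. The upper bound on the minimum distance comes from exhibiting explicit supports on a vertical line $L_\alpha:\ x=\alpha$ with $\alpha\neq 0$ (so that $L_\alpha$ avoids $P_0$): the relevant monomials $y^j$ surviving in $H^0(C_r,\pi^*(\Ol_{Y_r}(d)(-aP_\infty-bP_0)))$ satisfy $j\le d-a$, and passing through the vertical line contributes $b$ further conditions at $P_0$ only if $\alpha=0$, so on $L_\alpha$ with $\alpha\neq0$ any $d+1-a+?$ points are dependent --- more precisely, the number of independent rows is $d+1-a$ coming from $1,y,\dots,y^{d-a}$, so $d+2-a$ points on $L_\alpha$ already give a codeword; that yields minimum distance $\le d+1$ if $a=0$ and $\le d$ if $a=1$. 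Wait --- I must be careful: when $a=0,b'>0$ the claim is $d'+1$, and when $a=1,b'>0$ the claim is $d'$; the vertical-line count with $\alpha\neq 0$ gives exactly these, since $b$ plays no role away from $P_0$. For the matching lower bound, let $S$ be the support of a minimum-weight codeword. Injectivity of the restriction map $\rho:H^0(\Pro^2,\Ol_{\Pro^2}(d)(-aP_\infty-bP_0))\to H^0(C_r,\pi^*(\cdots))$ (valid since $d<c$) forces $S$ to impose dependent conditions on the plane space, i.e. $h^1(\Pro^2,\mathcal I_{Z\cup S}(d))>h^1(\Pro^2,\mathcal I_{Z}(d))$ where $Z=aP_\infty\cup bP_0$, and in particular $h^1(\Pro^2,\mathcal I_{Z\cup S}(d))>0$. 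Then I would invoke Lemma~\ref{u00.01}: since $\deg(Z\cup S)=a+b+\sharp(S)$ is bounded by $2d+1$ in the relevant range, there is a line $\ell$ with $\deg(\ell\cap(Z\cup S))\ge d+2$. Analyzing which line $\ell$ can be --- it must absorb as much of $Z$ as possible, so $\ell$ is the vertical line through $P_\infty$ if $a>0$, or the line $y=0$ (tangent at $P_0$, contact order $c>d$) if $b>0$ --- gives $\sharp(S)\ge d+2-a-\min(b,\deg(\ell\cap P_0))$, and bookkeeping these cases yields $\sharp(S)\ge d+1-a$ when a vertical line is forced through $P_\infty$ and $P_0\notin\ell$, matching the upper bound in each item.

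For the counting statements I would argue as in Theorem~\ref{te2}: every set of $\delta$ ($=$ the minimum distance) collinear rational affine points on an admissible line is the support of a minimum-weight codeword, and conversely by the geometric analysis above every minimum-weight support is of this shape. The admissible lines are: the $q^r-1$ vertical lines $x=\alpha$ with $\alpha\neq 0$ (each meeting $Y_r$ in $q^{r-1}$ rational affine points, but if $b>0$ these lines still avoid $P_0$, so all $q^{r-1}$ points are available --- giving the $\binom{q^{r-1}}{d'+1}$ or $\binom{q^{r-1}-1}{\cdots}$ terms depending on whether $P_0$ is excluded), the $q^r-1$ horizontal lines $y=\beta$ with $\beta\neq 0$ when the monomials $x^i$ with $i$ up to the needed range survive (this requires $a'=0$ essentially, hence the extra $\binom{(q^r-1)/(q-1)}{\cdots}$-type term appears only in item analogues where horizontal lines contribute), and --- in the $b'>0$ cases --- one must remove $P_0$ from the vertical line $x=0$'s contribution since $P_0$ is deleted from $B$, which is exactly why $\binom{q^{r-1}-1}{d'+1}$ (rather than $\binom{q^{r-1}}{d'+1}$) shows up. Assembling the per-line counts and multiplying by the number of admissible lines gives the stated totals; the ``at least'' in item~(1) versus ``exactly'' in items~(2),(3) reflects whether the geometric classification of minimum-weight supports is complete (it is not, in case~(1), because there one cannot rule out non-vertical contributing lines without the finer analysis of later sections).

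The main obstacle I expect is the careful case analysis of which line $\ell$ the residual-cohomology argument produces, and in particular controlling $\deg(\ell\cap Z)$ when $\ell$ is forced to pass through both $P_\infty$ and $P_0$ (impossible unless $\ell=x=0$, which hits $P_0$ with multiplicity $1$ since $Y_r$ is smooth and transverse there) versus through one of them only; getting the inequality $\sharp(S)\ge d+2-a$ sharp in every sub-case, and then matching it against the explicit vertical-line constructions, is where one must be most careful. The counting is then essentially bookkeeping once the structure of minimum-weight supports is pinned down, modulo the honest admission that in case~(1) only a lower bound is claimed.
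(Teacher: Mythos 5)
Your overall strategy --- reduce via Lemma~\ref{reduc}, get an upper bound from explicit collinear supports, get the matching lower bound from injectivity of the restriction map plus Lemma~\ref{u00.01}, then count --- is the paper's. But there is a concrete error in the upper-bound step, which you half-noticed (``Wait --- I must be careful'') and then resolved the wrong way. In cases (1) and (3), i.e.\ when $b'>0$, you propose to exhibit dependent point sets on a vertical line $L_\alpha:\ x=\alpha$ with $\alpha\neq 0$, arguing that ``$b$ plays no role away from $P_0$''. That is precisely the problem: away from $P_0$ the restriction of $H^0(C_r,\pi^*(\Ol_{Y_r}(d')(-a'P_\infty-b'P_0)))$ to $L_\alpha$ still contains the constant function (for instance as the restriction of $x^{b'}$, which vanishes to order $b'$ at $P_0$, vanishes at $P_\infty$, and restricts to the nonzero constant $\alpha^{b'}$), so that restriction is spanned by $1,y,\dots,y^{d'-a'}$ and you only get dependence among $d'+2-a'$ points --- that is $d'+2$ in case (1) and $d'+1$ in case (3), one worse than the claimed $d'+1$ and $d'$. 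The correct construction (the paper's) takes the points on the line $x=0$, which passes through $P_0$: there every monomial involving $x$ restricts to zero and the constant is excluded by the vanishing condition at $P_0$, so the restriction is spanned by $y,\dots,y^{d'}$ (case (1)) or $y,\dots,y^{d'-1}$ (case (3)), and $d'+1$ (resp.\ $d'$) points suffice. Indeed your own lower-bound analysis shows that a minimum-weight support must lie on a line meeting $b'P_0$, so no support of the claimed weight can lie on $x=\alpha$ with $\alpha\neq 0$; the gain of one in the minimum distance comes exactly from forcing the line through $P_0$.

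Two smaller issues. First, you defer the situation $b>d$ (which only feeds item (2)) to Theorems~\ref{te1}, \ref{te2} and \ref{teo_a}, but $\mC(d',1,0)$ is evaluated on $B=Y_r(\F_{q^r})\setminus\{P_\infty,P_0\}$, not on $Y_r(\F_{q^r})\setminus\{P_\infty\}$; the minimum distance survives this puncturing, but the codeword count does not (the line $x=0$ contributes $\binom{q^{r-1}-1}{d'+1}$ rather than $\binom{q^{r-1}}{d'+1}$, which is exactly why the formula in item (2) differs from the one in Theorem~\ref{teo_a}). You would need the puncturing/shortening duality spelled out, or, as the paper does, a uniform direct argument for $\mC(d',a',b')$. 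Second, your counting paragraph conflates the factor $q^r-1$ counting vertical lines with the factor $q^r-1$ counting nonzero scalar multiples of a codeword with a given support, and introduces horizontal-line terms of the shape $\binom{(q^r-1)/(q-1)}{\cdot}$ that occur in Theorem~\ref{te2} but in none of the three formulas here; in cases (1) and (3) only lines through $P_0$ (in case (3), only $x=0$) carry minimum-weight supports, which is where all the $\binom{q^{r-1}-1}{\cdot}$ terms come from, and this also explains the ``at least'' versus ``exactly'' distinction you gesture at.
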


\begin{proof}
By Lemma \ref{reduc} we have $\mathcal{C}(d,a,b)\sim \mathcal{C}(d',a',b')$.
Hence
we can study the properties of the code $\mathcal{C}(d',a',b')$ without loss of
generality. If $a'=0$ and $b'>0$ then $d+1$ affine points of the curve different
from $P_0$ on the line of equation $x=0$ impose dependent conditions to
$H^0(C_r,\pi^*(\Ol_{Y_r}(d')(-b'P_0)))$ because the monomials $y,...,y^d \in
H^0(C_r,\pi^*(\Ol_{Y_r}(d')(-b'P_0)))$ and $y^{d+1}$ does not lie in this space.
If $a'=1$ and $b'=0$ then $d'+1$ affine points of the curve $Y_r$ on any line of
equation $x=\alpha$ ($\alpha \in \F_{q^r}$) and different from $P_0$ impose
dependent conditions to $H^0(C_r,\pi^*(\Ol_{Y_r}(d')(-P_\infty)))$ because
$1,y,...,y^{d-1}$ are in the basis of the vector space
$H^0(C_r,\pi^*(\Ol_{Y_r}(d')(-P_\infty)))$ and $y^{d}$ are not. If $a'=1$ and
$b'>0$ then any $d'$ affine points of the curve different from $P_0$ on the line
of equation $x=0$ impose dependent conditions to
$H^0(C_r,\pi^*(\Ol_{Y_r}(d')(-a'P_\infty-b'P_0)))$ because $y,...,y^{d-1} \in
H^0(C_r,\pi^*(\Ol_{Y_r}(d')(-a'P_\infty-b'P_0)))$ and $1,y^d$ do not. So in
cases (1) and (2) the dual minimum distance of $\mC(d',a',b')$ is at most
$d'+1$. In case (3) it is at most $d'$.
Let $S \subseteq B=Y_r(\F_{q^r})\setminus \{ P_0,P_\infty\}$ be the support of a
minimum-weight codeword of $\mC(d',a',b')^\perp$. The minimum distance of this
code is exactly $\sharp(S)$. The set $S$ imposes dependent conditions to the
space
$H^0(C_r,\pi^*(\Ol_{Y_r}(d')(-a'P_\infty-b'P_0)))$ and so it imposes dependent
conditions also to
$H^0(\Pro^2,\mathcal{I}_{a'P_\infty+b'P_0}(d'))$. It follows
$h^1(\Pro^2,\mathcal{I}_{a'P_\infty+b'P_0}(d'))>0$.
\begin{itemize} 
 \item Assume to be in case (1) or in case (2). Since $\sharp(S)+a'+b' \le
d'+1+1+b' \le 2d'+1$, Lemma \ref{u00.01} gives the existence of a line $L
\subseteq \Pro^2$ such that
$\deg(L \cap (a'P_\infty+b'P_0 \cup S)) \ge d'+2$. If $a'=0$ then
$\sharp(S)=d+1$. Otherwise $L$ has to be the tangent line to $Y_r$ at $P_0$,
which is absurd because $a'+b' \le d$. If $b'=0$ then $\sharp(S)=d+1$ because
$a'=1$. Hence the minimum distance of $\mC(d',a',b')^\perp$ is exactly $d'+1$.
If $b=0$ then any $d'+1$ affine points of the curve $Y_r$ different from $P_0$
on a vertical
line are in fact the support of a minimum weight codeword. There are
$\binom{q^{r-1}}{d'+1}$ such points on any such a line different from the line
of equation $x=0$ and $\binom{q^{r-1}-1}{d'+1}$ such points on the line of
equation $x=0$. If $a=0$ and $b>0$ then $d'+1$ points of the support of a
minimum-weight codeword of $\mC(d',0,b')^\perp$ lie on a line passing through
$P_0$.
\item Assume to be in case (3). As in the
previous part of the proof we get the existence of a line $L
\subseteq \Pro^2$ such that $\deg(L \cap a'P_\infty+b'P_0 \cup S)\ge d'+2$.
Since $\sharp(S) \le d'$ and $L$ cannot be the tangent line to $Y_r$ at $P_0$,
it follows
that $L$ is the line of equation $x=0$. The number of the
minimum-weight codewords trivially follows.
\end{itemize}
\end{proof}

\begin{remark}
 The hypothesis $a'+b' >0$ implicitly assumed in Theorem \ref{teo_ab} is in
fact not restrictive.
Indeed, if $a=b=0$ then, for any $d$, the code $\mC(d,0,0)$ is the code
$\mC(d,0)$
without the component corresponding to the evaluation at $P_0$.
\end{remark}

\begin{remark}
 The divisor of the rational function $y$ on the curve $Y_r$ is
$(y)=cP_0-cP_\infty$ (see \cite{geo}, Section 3). Hence we get the linear
equivalence $cP_0 \sim cP_\infty$. So if $d<q$ then Theorem \ref{teo_ab} is very
useful to
study two-point codes on norm-trace curves  (see Example \ref{ex2} below).
\end{remark}

The following is an interesting computational example.

\begin{example} \label{ex2}
 Set $r:=3$ and $q:=3$, so that $c=13$. Let us study the two-point code $\mC$ on
the curve $Y_3$ of equation
$$x^{13}=y^9+y^3+y$$ obtained evaluating the vector space $L(12P_\infty+11P_0)$
on
the set $B:=Y_r(\F_{q^r})\setminus \{ P_0,P_\infty\}$. Observe that
$12P_\infty \sim cP_\infty-P_\infty$ and that $11P_0 \sim cP_0-2P_0 \sim
cP_\infty-2P_0$. Hence
$$12P_\infty+11P_0 \sim 2cP_\infty-P_\infty-2P_0.$$ Set $d:=2$, $a:=1$ and
$b:=2$. Since $d<q$ the code $\mC$ is in fact strongly isometric to the code
$\mC(2,1,2)$ of
Definition \ref{def_ab} and its dual minimum distance is $2$.
Indeed, we can set $a':=a$, $b':=b$ and $d':=d$ and apply directly Theorem
\ref{teo_ab}. Let us study in details the code $\mC^\perp$. By using the linear
equivalence $12P_\infty+11P_0 \sim 26P_\infty-P_\infty-2P_0$ we have already
seen
that
$$L(12P_\infty+11P_0) \cong L(26P_\infty-P_\infty-2P_0)\cong
L(25P_\infty-2P_0).$$ The results of Section \ref{si} assure that we are not
changing the metric properties of the code $\mC^\perp$ by using these linear
equivalences. Apply the preliminary results of Section \ref{intr} to compute a
vector basis of $L(25P_\infty)$:
$$\{ 1,y,x,xy,x^2\}.$$
The rational function $x$ has a zero at $P_0$ of order $1$, while the rational
function $y$ has a zero at $P_0$ of order $c=13$ (see \cite{geo}, Section 3).
Hence $1,x \notin L(25P_\infty-2P_0)$ and 
$$\{ y,xy,x^2\} \subseteq L(25P_\infty-2P_0).$$
On the other hand, the Riemann-Roch space $L(25P_\infty-2P_0)$ is equal to
the vector space $$H^0(C_3,\pi^*(\Ol_{Y_r}(2)(-P_\infty-2P_0)))$$ (see Section
\ref{intr} again). Set $S:=2P_0$. The scheme $S$ imposes independent conditions
to the vector space $H^0(C_3,\pi^*(\Ol_{Y_r}(2)(-P_\infty)))$. Indeed, if it
imposes dependent
conditions to this space then it has to impose dependent conditions also to
$H^0(\Pro^2,\mathcal{O}_{\Pro^2}(2)(-P_\infty))$ (use the injectivity of the map
$\rho_{d,1}$ as in the proof of Theorem \ref{teo_a}). By Lemma \ref{u00.01}
there must exist a line $L \subseteq \Pro^2$ with the property $\deg(L \cap
(P_\infty \cup S)) \ge d+2=4$, which is absurd, because $\deg(S)=2$. This proves
that the dimension of $H^0(C_3,\pi^*(\Ol_{Y_r}(2)(-P_\infty-2P_0)))$ is
$\dim_{\F_{q^3}} L(25P_\infty) -2$. It follows that $\{
y,xy,x^2\}$ is in fact a basis of the
Riemann-Roch space $L(25P_\infty-2P_0)\cong L(12P_\infty+11P_0)$. So we have all
the explicit data needed to construct the code $\mathcal{C}^\perp$ in a
\texttt{Magma} environment. It can be checked that the minimum distance of
$\mC^\perp$ is in fact $2=d$. Hence the number of the minimum-weight codewords
of $\mC^\perp$
is exactly $728=26\cdot \binom{8}{2}$ (Theorem
\ref{teo_ab}).
\end{example}

\section{More general evaluation codes}

The result of Section \ref{one} and Section \ref{two} can be slightly extended
by using zero-dimensional schemes whose support is made of arbitrary affine
points of the curve $Y_r$.

\begin{definition}\label{def_E}
  Let $0<d<q^{r-1}-1$ be an integer. Choose a zero-dimensional subscheme $E
\subseteq \Pro^2$ such that $E_{red} \subseteq Y(\F_{q^r}) \cap \{ z=1\}$. We
denote by $\mC(d,E)$ the code obtained evaluating 
$H^0(C_r,\pi^{*}(\Ol_{Y_r}(d)(-E)))$ on the set $B:=Y_r(\F_{q^r})\setminus
(E_{red} \cap Y(\F_{q^r}))$.
\end{definition}

\begin{definition}
 Let $E \subseteq \Pro^2$ be a zero-dimensional scheme. Denote by $\mL$ the set
of the lines in $\Pro^2$ different from the line of equation $y=0$ and the line
at infinity of equation $z=0$. Denote by $\mV$ the set of the vertical lines in
$\Pro^2$. Define
$$m(E):=\max_{L \in \mL} \ \deg(E \cap L), \ \ \ \ m_\mV(E):=\max_{L \in \mV} \
\deg(E \cap L).$$
\end{definition}

\begin{theorem}
 Consider a $\mC(d,E)$ code as in Definition \ref{def_E}. Assume $\deg(E) \le
d$. The minimum distance of $\mC(d,E)^\perp$ is at least $d+2-m(E)$. If
$m(E)=m_\mV(E)$ then the minimum distance of $\mC(d,E)^\perp$ is exactly
$d+2-m_\mV(E)$ and the number of the minimum-weight codewords of
$\mC(d,E)^\perp$ is at least
$$(q^r-1)\left[ (q^{r}-1)\binom{q^{r-1}}{m_\mV(E)} + \binom{q^{r-1}-1}{m_\mV(E)}
\right].$$
\end{theorem}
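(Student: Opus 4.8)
The plan is to follow the same template used in the proofs of Theorems \ref{teo_a} and \ref{teo_ab}, namely: produce an upper bound on the minimum distance by an explicit Vandermonde-type dependence among columns of the parity-check matrix, then produce a matching lower bound by transporting the geometric failure of independence up to $\mathbb{P}^2$ and invoking Lemma \ref{u00.01}.

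First I would establish the upper bound $d+2-m(E)$ on the minimum distance of $\mathcal{C}(d,E)^\perp$ \emph{only} in the case $m(E)=m_{\mathcal{V}}(E)$, which is the case where the theorem actually asserts a value. Let $L\in\mathcal{V}$ be a vertical line of equation $x=\alpha$ with $\deg(E\cap L)=m_{\mathcal{V}}(E)$. The space $H^0(C_r,\pi^*(\mathcal{O}_{Y_r}(d)(-E)))$ contains, among its monomials $x^iy^j$, only those for which the power of $y$ is at most $d-m_{\mathcal{V}}(E)$ (since a form vanishing on $E\cap L$ must be divisible, after restricting to $L$, by a degree-$m_{\mathcal{V}}(E)$ polynomial in $y$), while $y^0,y^1,\dots,y^{d-m_{\mathcal{V}}(E)}$ all lie in the space. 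Choosing $d+2-m_{\mathcal{V}}(E)$ affine points of $Y_r(\F_{q^r})$ on $L$ that are distinct from $E_{red}$ — which is possible since $|Y_r(\F_{q^r})\cap L|=q^{r-1}$ and $\deg(E)\le d$ — forces a dependence among the associated columns, exactly as in Theorem \ref{teo_ab}. Hence the dual minimum distance is at most $d+2-m_{\mathcal{V}}(E)$.

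Next I would prove the lower bound $d+2-m(E)$, valid unconditionally. Let $S\subseteq B$ be the support of a minimum-weight codeword; then $\sharp(S)$ equals the dual minimum distance and no proper subset of $S$ imposes dependent conditions on $H^0(C_r,\pi^*(\mathcal{O}_{Y_r}(d)(-E)))$. Since $d<q^{r-1}-1<c=\deg(Y_r)$, the restriction/pull-back map $H^0(\mathbb{P}^2,\mathcal{I}_E(d))\to H^0(C_r,\pi^*(\mathcal{O}_{Y_r}(d)(-E)))$ is injective, so $S$ imposes dependent conditions on $H^0(\mathbb{P}^2,\mathcal{I}_E(d))$; equivalently $h^1(\mathbb{P}^2,\mathcal{I}_{E\cup S}(d))>h^1(\mathbb{P}^2,\mathcal{I}_E(d))$, and in particular $h^1(\mathbb{P}^2,\mathcal{I}_{E\cup S}(d))>0$. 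Assuming $\sharp(S)\le d+1-m(E)$ for contradiction — the complementary range $\sharp(S)\ge d+2-m(E)$ is what we want — we have $\deg(E\cup S)=\deg(E)+\sharp(S)\le 2d+1$, so Lemma \ref{u00.01}(b) yields a line $L_0$ with $\deg(L_0\cap(E\cup S))\ge d+2$. Then $\sharp(S)\ge \deg(L_0\cap S)\ge d+2-\deg(L_0\cap E)$, and since $\deg(L_0\cap E)\le \max(m(E),\deg(E\cap\{y=0\}),\deg(E))$ one has to observe that the only lines not counted in $m(E)$ — the tangent line $y=0$ and the line at infinity $z=0$ — carry no affine points of $S$ in a way that helps: because $E$ is supported on affine points of $Y_r$ and the tangent line $y=0$ meets $Y_r$ only at $P_0$ while $z=0$ meets $Y_r$ only at $P_\infty$, a line $L_0$ achieving $\deg(L_0\cap(E\cup S))\ge d+2$ with $S\ne\emptyset$ cannot be either of these (it would force $\deg(L_0\cap E)\ge d+2-\sharp(S)\ge \sharp(S)\ge 1$, hence part of $E$ on a line meeting $Y_r$ only at a point not in $E_{red}$, absurd unless $\deg(L_0\cap E)=0$, which still gives $\sharp(S)\ge d+2$). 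Thus $L_0\in\mathcal{L}$, $\deg(L_0\cap E)\le m(E)$, and $\sharp(S)\ge d+2-m(E)$, contradicting the assumption. This gives the lower bound; combined with the upper bound it pins the minimum distance to $d+2-m_{\mathcal{V}}(E)$ when $m(E)=m_{\mathcal{V}}(E)$.

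Finally, for the count of minimum-weight codewords I would exhibit many supports of weight $d+2-m_{\mathcal{V}}(E)$: on each vertical line $L$ with $\deg(E\cap L)=m_{\mathcal{V}}(E)$, every choice of $d+2-m_{\mathcal{V}}(E)$ affine points of $Y_r(\F_{q^r})\cap L$ outside $E_{red}$ is such a support; on the line $x=0$ there are $\binom{q^{r-1}-1}{\,m_{\mathcal V}(E)\,}$-many... rather, I would bound below by $(q^r-1)$ choices of the vertical line with $\alpha\ne 0$ contributing $\binom{q^{r-1}}{d+2-m_{\mathcal{V}}(E)}$ each and the line $x=0$ contributing $\binom{q^{r-1}-1}{d+2-m_{\mathcal{V}}(E)}$, then apply the norm map to the (at most $d$) horizontal lines as in Theorem \ref{te2} — the displayed bound $(q^r-1)\big[(q^r-1)\binom{q^{r-1}}{m_{\mathcal V}(E)}+\binom{q^{r-1}-1}{m_{\mathcal V}(E)}\big]$ is then obtained after the reindexing $m_{\mathcal V}(E)\leftrightarrow d+2-m_{\mathcal V}(E)$ forced by the relation between the $y$-degree drop and the weight (one must double-check which of the two binomial arguments the theorem intends, since the statement as printed writes $m_{\mathcal V}(E)$ in the lower slot). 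The main obstacle I anticipate is precisely the bookkeeping in the lower-bound step: ruling out that the Lemma \ref{u00.01} line $L_0$ is the tangent line $y=0$ or the line at infinity, and correctly accounting for the contribution $\deg(L_0\cap E)$ when $E$ is non-reduced and possibly fat at several points, so that $m(E)$ (rather than some larger quantity) really is the right correction term; everything else is a routine adaptation of the one- and two-point arguments already in the paper.
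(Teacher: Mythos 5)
Your skeleton is the same as the paper's: an explicit dependence supported on a vertical line realizing $m_\mV(E)$ for the upper bound, and injectivity of the restriction map plus Lemma \ref{u00.01} for the lower bound. The lower-bound half is essentially the paper's argument and is sound (the paper gets $\deg(E\cup S)\le 2d+1$ from the already-established inequality $\sharp(S)\le d+2-m_\mV(E)$ rather than from your contradiction hypothesis, but both work). Your justification for excluding the lines $y=0$ and $z=0$ is garbled, though: $P_0$ may perfectly well belong to $E_{red}$, so ``part of $E$ on a line meeting $Y_r$ only at a point not in $E_{red}$'' is not the right reason. The correct one is simply that $z=0$ contains no affine point, while $y=0$ meets $Y_r$ only at $P_0$, so $\deg(\{y=0\}\cap(E\cup S))\le\deg(E)+1\le d+1<d+2$.

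The upper-bound half as written contains a false assertion: $H^0(C_r,\pi^*(\Ol_{Y_r}(d)(-E)))$ does not contain $y^0,\dots,y^{d-m_\mV(E)}$ (for $E\ne\emptyset$ the constant $1$ vanishes nowhere), and this space has no monomial basis in general. Your parenthetical remark is the correct repair: the restriction to $L$ of any section is divisible by the degree-$m_\mV(E)$ form cutting out $E\cap L$ on $L$, so the restrictions span a space of dimension at most $d+1-m_\mV(E)$ and any $d+2-m_\mV(E)$ points of $(Y_r\cap L)\setminus E_{red}$ give dependent columns. The paper argues differently at this point: it first checks that $E\cap L$ is reduced for vertical $L$ (no vertical line is tangent to $Y_r$ at an affine point), sets $A:=E\cap L$, shows that $S\cup A$ (degree $d+2$ on a line) imposes dependent conditions while $A$ imposes independent ones by Lemma \ref{u00.01}(a), and concludes that $S$ imposes dependent conditions on $H^0(C_r,\pi^*(\Ol_{Y_r}(d)(-A)))\supseteq H^0(C_r,\pi^*(\Ol_{Y_r}(d)(-E)))$. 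On the codeword count, your suspicion about the lower index is reasonable: comparison with Theorem \ref{teo_ab} indicates it should be the weight $d+2-m_\mV(E)$, not $m_\mV(E)$. But be aware that both your sketch and the paper's proof only actually certify minimum-weight supports on vertical lines that \emph{realize} $m_\mV(E)$: on a line $L'$ with $\deg(E\cap L')<m_\mV(E)$ the argument only yields codewords of weight at most $d+2-\deg(E\cap L')$, whose supports need not fill the chosen point set, so summing a binomial over all $q^r$ vertical lines requires further justification. Your additional appeal to horizontal lines and the norm map does not occur in the paper's proof and is not needed for the displayed bound.
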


\begin{proof}
 If $E=\emptyset$ then the thesis trivially follows from Theorem \ref{te1}.
Assume $E \neq \emptyset$. There obviously exists a vertical line $L$ such that
$\deg(L \cap E) \ge 1$ and, by definition of $m_\mV(E)$, $\deg(L \cap E) \le
m_\mV(E)$. The
scheme $L \cap E$ is reduced. Indeed, if there exists a point $P\in Y(\F_{q^r})
\cap \{ z=1 \}$ such that $2P \subseteq L \cap E$ then $L$ has to be the tangent
line to $Y_r$ at $P$. The tangent line to $Y_r$ at
$P=(\overline{x}:\overline{y}:\overline{z})$ has equation
$$\overline{x}^{c-1} x-\overline{z}^{c-1}y+\frac{\partial Y_r}{\partial
z}(\overline{x}:\overline{y}:\overline{z}) z=0.$$ 

Since $\overline{z} \neq 0$, this line cannot be vertical, a contradiction. Let
$L$ be a vertical line
which realizes the maximum in the definition of $m_\mV(E)$. Set
$A:=E \cap L$ and observe that $\deg(A)=m_\mV(E)$. Choose $d+2-m_\mV(E)$
distinct points in $L \setminus A$ and denote by $S$ their union (as a
zero-dimensional scheme). Since $d<q^{r-1}-1<c$, the restriction map
$$\rho_d:H^0(\Pro^2,\mathcal{O}_{\Pro^2}(d)) \to H^0(C_r,\pi^*(\Ol_{Y_r}(d)))$$
is injective. As in the proof of Theorem \ref{te1}, the set $S \cup A$ (whose
degree is $d+2$) imposes
dependent conditions to $H^0(C_r,\pi^*(\Ol_{Y_r}(d)))$. On the other hand, the
set $A$ imposes independent conditions to this space. Indeed, if it imposes
dependent conditions, then by Lemma \ref{u00.01} there must exist a line $R
\subseteq
\Pro^2$ such that $\deg(R \cap A) \ge d+2$. Since $\deg(A) \le \deg(E)$, this
leads to a contradiction. It follows that $S=(S \cup A)\setminus A$ imposes
dependent conditions to the space $H^0(C_r,\pi^*(\Ol_{Y_r}(d)(-A)))$. In
particular, it imposes dependent conditions to
$H^0(C_r,\pi^*(\Ol_{Y_r}(d)(-E)))$. In other words, $S$ contains the support of
a codeword of $\mathcal{C}(d,E)^\perp$. Hence the minimum distance, say
$\delta$, of  $\mathcal{C}(d,E)^\perp$ has to verify $\delta \le
\sharp(S)=d+2-m_\mV(E)$. Assume that $S \subseteq B=Y_r(\F_{q^r})\setminus
(E_{red} \cap Y(\F_{q^r}))$ is the support of a minimum-weight codeword of
$\mC(d,E)^\perp$. The minimum distance of $\mC(d,E)^\perp$ is exactly
$\sharp(S)$ and
$\sharp(S) \le d+2-m_\mV(E)$. Since the restriction map
$$\rho_{d,E}:H^0(\Pro^2,\mathcal{O}_{\Pro^2}(d)(-E)) \to
H^0(C_r,\pi^*(\Ol_{Y_r}(d)(-E)))$$
is injective ($d<q^{r-1}-1 < c$ by assumption), the set $S$ has to impose
dependent conditions to the space
$H^0(\Pro^2,\mathcal{O}_{\Pro^2}(d)(-E))$ and in particular we have
$h^1(\Pro^2,\mathcal{I}_{E\cup S}(d))>0$. Since $\deg(E \cup S) \le
d+d+2-m_\mV(E)
\le 2d+1$ we can apply Lemma \ref{u00.01} to get the existence of a line $R
\subseteq \Pro^2$ such that $\deg(R \cap (E \cup S)) \ge d+2$. The line $R$
cannot be neither the line of equation $y=0$, or the line at infinity. It
follows $\sharp(S) \ge \deg(R\cap S) \ge d+2-\deg(R \cap E) \ge d+2-m(E)$. This
proves that the minimum distance of $\mC(d,E)^\perp$ is at least $d+2-m(E)$. If
$m(E)=m_\mV(E)$ then we have in fact proved that the minimum distance of
$\mC(d,E)^\perp$
is exactly $d+2-m(E)$ and any $d+2-m(E)$ points on a vertical line avoiding the
support of $E$ are the support of a minimum-weight codeword. The theorem is
proved.
\end{proof}

\section{Remarks on the dimension of two-point codes on norm-trace curves}

Let $m,n$ be integers such that $m+n>0$. Write $m=d_1c-a$ and
$n=d_2c-b$ with $0 \le a,b \le c-1$. Set $d:=d_1+d_2$. On the curve $Y_r$ it
holds the linear equivalence $cP_0 \sim cP_\infty$ and so we get
$$mP_\infty+nP_0 \sim dcP_\infty-aP_\infty-bP_0.$$
If $d<q$ then the two-point code on $Y_r$ obtained evaluating the rational
functions in the Riemann-Roch space $L(mP_\infty+nP_0)$ on the set
$B:=Y(\F_{q^r}) \setminus \{ P_\infty, P_0\}$ is in fact the code obtained
evaluating the vector space $H^0(C_r,
\pi^*(\mathcal{O}_{Y_r}(d)(-aP_\infty-bP_0)))$ on $B$, i.e.
$\mC(d,a,b)$ (see Definition \ref{def_ab}). 

\begin{lemma}\label{dim}
 Let $0<d<q$, $0 \le a,b \le c-1$ be integers with $b>0$. The dimension of
$\mC(d,a,b)$ is $h^0(Y_r, \mathcal{O}_{Y_r}(d)(-aP_\infty-bP_0))$.
\end{lemma}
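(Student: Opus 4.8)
The plan is to prove that the evaluation map defining $\mC(d,a,b)$ is injective; the asserted dimension then follows from the identifications between Riemann--Roch spaces and spaces of forms recorded in Section \ref{intr}. Recall from Definition \ref{def_ab} that $\mC(d,a,b)$ is the image of the $\F_{q^r}$-linear evaluation map
$$\mathrm{ev}\colon H^0\big(C_r,\pi^{*}(\Ol_{Y_r}(d)(-aP_\infty-bP_0))\big)\longrightarrow \F_{q^r}^{B},\qquad B=Y_r(\F_{q^r})\setminus\{P_\infty,P_0\},$$
and that $\sharp B=q^{2r-1}-1$, since $Y_r$ carries $q^{2r-1}+1$ rational points of which only $P_\infty$ lies at infinity. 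So it suffices to show $\ker(\mathrm{ev})=0$: in that case $\dim_{\F_{q^r}}\mC(d,a,b)$ equals the dimension of the domain, which, because $0<d<q$, is exactly $h^0(Y_r,\Ol_{Y_r}(d)(-aP_\infty-bP_0))$ (for such $d$ one has $d<c=\deg Y_r$, so no degree-$d$ form vanishes on $Y_r$ and the degree-$d$ forms on $Y_r$ are precisely $H^0(C_r,\pi^{*}\Ol_{Y_r}(d))$ as in Section \ref{intr}; imposing the base conditions at $P_\infty$ and $P_0$ then describes the twisted space on both sides).

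First I would take a nonzero $f\in H^0(C_r,\pi^{*}(\Ol_{Y_r}(d)(-aP_\infty-bP_0)))$ with $\mathrm{ev}(f)=0$ and derive a contradiction. View $\pi^{*}f$ as a global section of the line bundle $\pi^{*}\Ol_{Y_r}(d)$ on the smooth curve $C_r$; this bundle has degree $d\cdot\deg(Y_r)=dc$, so the divisor of zeros of $\pi^{*}f$ is effective of degree exactly $dc$. By the twist this divisor is $\ge aQ_\infty+bP_0$ (with $Q_\infty=\pi^{-1}(P_\infty)$), and since $f$ vanishes on all of $B$ it also contains each of the $q^{2r-1}-1$ points of $B$, none of which equals $Q_\infty$ or $P_0$. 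Counting degrees gives
$$q^{2r-1}-1+a+b\ \le\ dc.$$

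The contradiction is then immediate: since $d\le q-1$ we get $dc\le(q-1)c=q^r-1$, whereas $r\ge 2$ forces $2r-1>r$, hence $q^{2r-1}-1>q^r-1$, which is incompatible with the displayed inequality. Therefore $f=0$, the map $\mathrm{ev}$ is injective, and the lemma follows. The one step that requires genuine care is the last clause of the first paragraph, namely identifying $\dim_{\F_{q^r}}H^0(C_r,\pi^{*}(\Ol_{Y_r}(d)(-aP_\infty-bP_0)))$ with $h^0(Y_r,\Ol_{Y_r}(d)(-aP_\infty-bP_0))$ at the singular point $P_\infty$; this is exactly where the hypotheses $0<d<q$ and $b>0$ are used, and it is the only point beyond bookkeeping, the zero-counting of the second paragraph being routine once $\deg\pi^{*}\Ol_{Y_r}(d)=dc$ and $\sharp Y_r(\F_{q^r})=q^{2r-1}+1$ are available, both recalled in Section \ref{intr}.
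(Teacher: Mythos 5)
Your proposal is correct and follows essentially the same route as the paper: both prove injectivity of the evaluation map by the degree count $dc-a-b\le q^r-1<q^{2r-1}-1=\sharp B$ (you phrase it via the zero divisor of a section, the paper via $\ell(D-\overline D)=0$ for $\deg(D-\overline D)<0$), and both then identify the dimension of the domain with $h^0(Y_r,\Ol_{Y_r}(d)(-aP_\infty-bP_0))$ using $0<d<q$ exactly as in Section \ref{intr}.
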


\begin{proof}
The point $P_\infty$ is a singular point. We denote by $\pi:C_r \to Y_r$ the
normalization of the norm-trace curve $Y_r$. The map $\pi$ is known to be a
bijection. Let $Q_0:=\pi^{-1}(P_0)$ and $Q_\infty:=\pi^{-1}(P_\infty)$, which is
a nonsingular point of $C_r$. Since $d<q$ it follows that $\mC(d,a,b)$ is the
code obtained evaluating the vector space $L(dcQ_\infty-aQ_\infty-bQ_0)$ on the
set $\pi^{-1}(B)$. Since $|\pi^{-1}(B)|=q^{2r-1}-1$ we have
$$dc-a-b-\deg(\pi^{-1}(B))<0.$$ It follows that the kernel of the evaluation map
$\mbox{ev}: L(dcQ_\infty-aQ_\infty-bQ_0) \to \F_{q^r}^{|B|}$ is a
zero-dimensional vector
space and so the image of $\mbox{ev}$ (which is exactly $\mC(d,a,b)$) has
dimension
$\ell(dcQ_\infty-aQ_\infty-bQ_0)=h^0(C_r,\pi^*(\Ol_{Y_r}
(d)(-aQ_\infty-bQ_0)))=h^0(Y_r,\Ol_{Y_r}(d)(-aP_\infty-bP_0))$.
\end{proof}

\begin{remark}
 The case $b=0$ is not of interest. Indeed, a $\mathcal{C}(d,a,0)$ code is a
shortening of a $\mC(d,a)$ code (see Definition \ref{def_a}).
\end{remark}

\begin{lemma} \label{-1}
  Let $0<d<q$, $0 \le a,b \le c-1$ be integers with $b>0$. If $b>d$ then
$\mC(d,a,b)$ has dimension $h^0(Y_r,\Ol_{Y_r}(d-1)(-aP_\infty))$.
\end{lemma}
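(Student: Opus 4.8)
The plan is to combine Lemma~\ref{dim} with the strong isometry already established in Lemma~\ref{reduc}. Since $0<d<q$ and $b>0$, Lemma~\ref{dim} tells us that the dimension of $\mC(d,a,b)$ equals $h^0(Y_r,\Ol_{Y_r}(d)(-aP_\infty-bP_0))$, i.e.\ the code is non-degenerate (the evaluation map is injective on the Riemann--Roch space, because the degree of the divisor is strictly smaller than $|\pi^{-1}(B)|$). So it suffices to compute this $h^0$ under the extra hypothesis $b>d$.

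First I would invoke Lemma~\ref{reduc}: for $d>1$ and $b>d$ we have a strong isometry $\mC(d,a,b)\sim \mC(d-1,a,0)$, and a strong isometry preserves the dimension of a code. Hence $\dim \mC(d,a,b)=\dim \mC(d-1,a,0)$. The geometric content of Lemma~\ref{reduc} is that every degree-$d$ form vanishing to order $b>d$ at $P_0$ is divisible by the tangent line $y=0$ at $P_0$ (whose contact order with $Y_r$ is $c$), so $H^0(C_r,\pi^*(\Ol_{Y_r}(d)(-aP_\infty-bP_0)))$ is carried isomorphically onto $H^0(C_r,\pi^*(\Ol_{Y_r}(d-1)(-aP_\infty)))$ by division by $y$, which on the level of evaluation vectors is multiplication by the nowhere-zero (on $B$) vector of values of $y$. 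Next I would identify $\mC(d-1,a,0)$ as a code on the affine curve: since $0<d-1<q$, the Riemann--Roch space $H^0(C_r,\pi^*(\Ol_{Y_r}(d-1)(-aP_\infty)))$ coincides with $L((d-1)cQ_\infty-aQ_\infty)$ by the discussion in Section~\ref{intr}, and again the degree of this divisor is below the number of affine points, so evaluation is injective and the dimension of $\mC(d-1,a,0)$ equals $\ell((d-1)cQ_\infty-aQ_\infty)=h^0(Y_r,\Ol_{Y_r}(d-1)(-aP_\infty))$.

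Chaining these equalities gives $\dim \mC(d,a,b)=h^0(Y_r,\Ol_{Y_r}(d-1)(-aP_\infty))$, which is the claim. The one case needing separate attention is $d=1$ (where Lemma~\ref{reduc} does not apply): then $b>d=1$ forces every degree-$1$ form vanishing at $P_0$ with multiplicity $\ge 2$ to be a scalar multiple of $y$, so $\mC(1,a,b)$ is one-dimensional (spanned by the evaluation of $y$) when $a\le 1$, and this matches $h^0(Y_r,\Ol_{Y_r}(0)(-aP_\infty))$, which is $1$ for $a=0$ and $0$ for $a\ge 1$ — so for $a\ge 1$ the code is the zero code, consistent with the fact that no nonzero degree-$1$ form can vanish at both $P_\infty$ and to order $\ge 2$ at $P_0$. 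The main obstacle, such as it is, is purely bookkeeping: making sure the injectivity-of-evaluation hypothesis from Lemma~\ref{dim} is legitimately inherited after the reduction (it is, because the reduced divisor $(d-1)cQ_\infty-aQ_\infty$ still has degree $<q^{2r-1}-1$), and handling the degenerate low-$d$ corner cases cleanly.
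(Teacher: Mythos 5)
Your proposal is correct and follows essentially the same route as the paper: both reduce via Lemma~\ref{dim} to computing $h^0(Y_r,\Ol_{Y_r}(d)(-aP_\infty-bP_0))$ and both rest on the same division-by-$y$ isomorphism coming from the tangent line at the smooth point $P_0$ having contact order $c\ge b$. The only cosmetic difference is that the paper applies that isomorphism directly at the level of the $H^0$ spaces (so no case split is needed), whereas your detour through the code-level isometry of Lemma~\ref{reduc} forces you to treat $d=1$ separately, which you do correctly.
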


\begin{proof}
 By Lemma \ref{dim} it is enough to prove that
$h^0(Y_r,\Ol_{Y_r}(d)(-aP_\infty-bP_0))=h^0(Y_r,\Ol_{Y_r}(d-1)(-aP_\infty)$. A
form $f \in H^0(Y_r,\Ol_{Y_r}(d)(-aP_\infty-bP_0))$ is a degree
 $d$ homogeneous polynomial on the curve $Y_r$ vanishing at $P_0$ with order at
least
$b$.
 Since $P_0$ is a nonsingular point of the curve $Y_r$, $f$ is divided by the
 equation of the tangent space to $Y_r$ at $P_0$, which is $y=0$. The division
 by $y$ defines in fact an isomorphism of vector spaces
 $$H^0(Y_r,\Ol_{Y_r}(d)(-aP_\infty-bP_0)) \rightarrow
 H^0(Y_r,\Ol_{Y_r}
 (d-1)(-aP_\infty),$$ whose inverse is the multiplication by $y$ (the
 tangent line to $Y_r$ at $P_0$ has contact order $c \ge b$).
\end{proof}

\begin{notation}
 The dimension of the Riemann-Roch space $L(sP_\infty)$ on $Y_r$ will be denoted
by
$N(s)$. If $0 \le s \le cq^r$ then $N(s)$ is the number of the
pairs $(i,j) \in \N^2$ such that $$i<q^r, \ \ j<q^{r-1}, \ \
iq^{r-1}+jc \le s.$$ The basis for $L(sP_\infty)$ made of the monomials
$x^iy^j$ ($i,j$ with the cited properties) will be denoted by $\mB_s$.
\end{notation}

\begin{proposition}\label{red}
 Let $0<d<q$, $0 \le a \le c-1$ and $0 \le b \le d$ be integers with $b>0$. Set
$s:=dc-a$. Then $h^0(Y_r,\Ol_{Y_r}(d)(-aP_\infty-bP_0))=\ell(s)-b$.
\end{proposition}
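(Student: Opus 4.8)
The plan is to compute $h^0(Y_r,\Ol_{Y_r}(d)(-aP_\infty-bP_0))$ by relating it to the Riemann–Roch space $L(sP_\infty)$ on the curve, whose dimension is $\ell(s)=N(s)$, and then to show that imposing the vanishing at $P_0$ of order $b$ drops the dimension by exactly $b$. By Lemma \ref{dim} we know the dimension of $\mC(d,a,b)$ equals $h^0(Y_r,\Ol_{Y_r}(d)(-aP_\infty-bP_0))$, so it suffices to prove the stated equality $h^0(Y_r,\Ol_{Y_r}(d)(-aP_\infty-bP_0))=\ell(s)-b$. Since $0<d<q$, Section \ref{intr} gives the identity of vector spaces $H^0(C_r,\pi^*(\Ol_{Y_r}(d)))=L(dcQ_\infty)$, and passing to the subspace vanishing at $P_\infty$ with multiplicity $a$ yields $H^0(C_r,\pi^*(\Ol_{Y_r}(d)(-aP_\infty)))=L((dc-a)Q_\infty)=L(sQ_\infty)$, which has dimension $\ell(s)$.

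Next I would analyze the effect of the condition $-bP_0$. Consider the evaluation-type map that sends $f\in L(sP_\infty)=H^0(C_r,\pi^*(\Ol_{Y_r}(d)(-aP_\infty)))$ to the vector of the first $b$ coefficients of its local expansion at $P_0$ (with respect to a local parameter; recall $P_0$ is a smooth point of $Y_r$ and $x$ is such a parameter, since $x$ vanishes at $P_0$ to order $1$ by \cite{geo}, Section 3). The kernel of this map is precisely $H^0(C_r,\pi^*(\Ol_{Y_r}(d)(-aP_\infty-bP_0)))$, so it remains to show the map is surjective, i.e.\ that the scheme $bP_0$ imposes $b$ independent conditions on $L(sP_\infty)$. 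Equivalently, for every $0\le t<b$ the inclusion $L(sP_\infty-(t+1)P_0)\subsetneq L(sP_\infty-tP_0)$ is strict. I would do this by exhibiting explicit functions: using the monomial basis $\mB_s$, note that $x^i y^j\in L(sP_\infty)$ with $i+j\le d$ vanishes at $P_0$ to order exactly $i+jc$ (since $\mathrm{ord}_{P_0}(x)=1$, $\mathrm{ord}_{P_0}(y)=c$). Because $b\le d<q\le c$, the orders $0,1,2,\dots,b-1$ are realized by $1,x,x^2,\dots,x^{b-1}$, and each of these monomials lies in $L(sP_\infty)$ since $s=dc-a\ge dc-(c-1)\ge c>b-1$ forces $(i,j)=(i,0)$ with $i\le b-1<q^r$ to satisfy $iq^{r-1}\le s$. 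These $b$ functions have pairwise distinct vanishing orders at $P_0$, all $<b$, which gives the $b$ independent conditions.

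Alternatively — and this is the cleaner route — I would argue geometrically as in Example \ref{ex2}: the scheme $S:=bP_0$ imposes independent conditions on $H^0(C_r,\pi^*(\Ol_{Y_r}(d)(-aP_\infty)))$, because otherwise, by injectivity of the restriction map $\rho_{d,a}$ (valid since $d<q^{r-1}-1<c$) the scheme $aP_\infty\cup bP_0$ would impose dependent conditions on $H^0(\Pro^2,\Ol_{\Pro^2}(d))$, forcing $h^1(\Pro^2,\mathcal I_{aP_\infty\cup bP_0}(d))>0$; since $\deg(aP_\infty\cup bP_0)=a+b\le (c-1)+d$, and in the relevant range $a+b\le 2d+1$, Lemma \ref{u00.01} would yield a line $L$ with $\deg(L\cap(aP_\infty\cup bP_0))\ge d+2$, which is impossible because such a line would have to be simultaneously a vertical line (to meet $P_\infty$ with multiplicity $>1$) and the tangent line $y=0$ to $Y_r$ at $P_0$ (to meet $P_0$ with multiplicity $>1$), a contradiction as those two lines are distinct; the remaining small cases where $a+b>2d+1$ are handled directly, or are excluded by the hypotheses. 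Hence $h^0$ drops by exactly $b$ and $h^0(Y_r,\Ol_{Y_r}(d)(-aP_\infty-bP_0))=\ell(s)-b$.

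The main obstacle I anticipate is bookkeeping around the degree bound needed to invoke Lemma \ref{u00.01}(b): one must check that $a+b$ (the degree of the scheme $aP_\infty+bP_0$) together with the $b$ conditions stays within the regime $z\le 2d+1$ of the lemma, and handle separately the corner cases where $a$ is close to $c-1$. The monomial-basis argument of the second paragraph sidesteps this entirely and is probably the safest to write out in full, so I would lead with that and relegate the geometric argument to a remark.
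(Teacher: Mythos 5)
Your proposal is correct and its main argument is essentially the paper's own: both establish the upper bound $\ell(s)-b\le h^0$ from the trivial inclusion and then exhibit the monomials $1,x,\dots,x^{b-1}\in\mB_s$ (using $b\le d$ and $\mathrm{ord}_{P_0}(x)=1$) to show the codimension is exactly $b$. Your phrasing via distinct vanishing orders $0,\dots,b-1$ at $P_0$ is in fact a slightly more careful justification of the independence of the $b$ conditions than the paper's, and the geometric alternative you sketch is not needed.
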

\begin{proof}
 First of all, let us consider the trivial inclusion of Riemann-Roch spaces
 $L(dcP_\infty-aP_\infty-bP_0) \subseteq L(dcP_\infty-aP_\infty)$. We have
in any case $\ell(dcP_\infty-aP_\infty-bP_0) \ge
\ell(dcP_\infty-aP_\infty)-b$.
 Since $b \le d$ in the
 basis $\mathcal{B}_{s}$ appear the monomials
 $1,x,...,x^{b-1}$. These rational functions are linearly independent and do
not
 lie in $L(dcP_\infty-aP_\infty-bP_0)$, because $x$ has a zero of order one
at $P_0$. Hence the dimension of this space is
 exactly
 $N(s)-b$. Moreover, it is spanned by the monomials in
 $\mathcal{B}_{s} \cap L(dcP_\infty-aP_\infty-bP_0)$.
\end{proof}

\begin{corollary}
  Let $0<d<q$, $0 \le a,b \le c-1$ be integers with $b>0$. Set $s:=dc-a$.
\begin{enumerate}
 \item If $b \le d$ then the dimension of $\mC(d,a,b)$ is $N(s)-b$.
\item If $b>d$ then the dimension of $\mC(d,a,b)$ is $N(s-c)$.
\end{enumerate}
\end{corollary}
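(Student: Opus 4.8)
The plan is to read off both claims directly from Lemma~\ref{dim}, Lemma~\ref{-1} and Proposition~\ref{red}, so that the argument reduces to a short piece of bookkeeping. Throughout I keep the notation $s=dc-a$ and I use, as in the proof of Lemma~\ref{dim}, that $\pi:C_r\to Y_r$ is a bijection with $Q_0=\pi^{-1}(P_0)$ and $Q_\infty=\pi^{-1}(P_\infty)$ a smooth point of $C_r$, so that $aP_\infty$ and $bP_0$ on $Y_r$ correspond to $aQ_\infty$ and $bQ_0$ on $C_r$ and the relevant $h^0$'s may be computed on $C_r$.

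For part (1), assume $0<b\le d$. Lemma~\ref{dim} gives that the dimension of $\mC(d,a,b)$ equals $h^0(Y_r,\Ol_{Y_r}(d)(-aP_\infty-bP_0))$, and Proposition~\ref{red} (whose hypotheses $0<d<q$, $0\le a\le c-1$, $0<b\le d$ are exactly the present ones) identifies this integer with $\ell(s)-b=N(s)-b$. This settles (1).

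For part (2), assume $b>d$. Then Lemma~\ref{-1} applies and shows that the dimension of $\mC(d,a,b)$ is $h^0(Y_r,\Ol_{Y_r}(d-1)(-aP_\infty))$, and it remains to match this with $N(s-c)$, where $s-c=(d-1)c-a$. For this I would invoke the preliminary material of Section~\ref{intr}: since $d-1<q$ (indeed $d<q$), the space of degree $d-1$ forms on $Y_r$ is exactly a Riemann--Roch space, namely $H^0(C_r,\pi^*(\Ol_{Y_r}(d-1)))=L((d-1)cQ_\infty)$ (for $d=1$ both sides are the constants). Under this identification a degree $d-1$ form vanishes at $P_\infty$ to order $\ge a$ precisely when the associated function of $L((d-1)cQ_\infty)$ lies in $L((d-1)cQ_\infty-aQ_\infty)=L((s-c)Q_\infty)$; hence $h^0(Y_r,\Ol_{Y_r}(d-1)(-aP_\infty))=\ell((s-c)Q_\infty)=N(s-c)$.

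I do not expect any genuine obstacle: the whole proof is a substitution on top of the three quoted results. The one point that deserves a line of care is the degenerate case $d=1$ of part (2), where $d-1=0$ and $s-c=-a\le 0$. There I would observe that $h^0(Y_r,\Ol_{Y_r})=1$ (constants), while a nonzero constant cannot vanish at $P_\infty$, so $h^0(Y_r,\Ol_{Y_r}(0)(-aP_\infty))$ equals $1$ if $a=0$ and $0$ if $a>0$; this is consistent with reading $N(t)=\ell(tP_\infty)$, which vanishes for $t<0$. Everything else is routine.
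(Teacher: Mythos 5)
Your proof is correct and follows the same route as the paper's: part (1) via Lemma~\ref{dim} together with Proposition~\ref{red}, and part (2) via Lemma~\ref{-1}. You merely spell out the identification $h^0(Y_r,\Ol_{Y_r}(d-1)(-aP_\infty))=N(s-c)$ (and the degenerate case $d=1$), which the paper leaves implicit.
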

\begin{proof}
 If $b \le d$ then apply Proposition \ref{red}. If $b>d$ then use Lemma
\ref{-1}.
\end{proof}

\section*{Acknowledgment} The authors would like to thank the Referees for
suggestions that improved the presentation of this work.

\bibliographystyle{elsarticle-num}

\end{document}